\documentclass[10pt]{amsart}
\usepackage{amsfonts}
\usepackage{amssymb}
\usepackage{amsthm}
\usepackage{amsmath}
\usepackage{amsopn}
\theoremstyle{plain}
\newtheorem{thm}{Theorem}
\newtheorem{prop}[thm]{Proposition}
\newtheorem{lem}[thm]{Lemma}
\newtheorem{cor}[thm]{Corollary}
\newtheorem{question}[thm]{Question}
\newtheorem{remark}[thm]{Remark}

\newtheorem{defn}{Definition}

\usepackage{mathptmx}

\begin{document}

\title{Effective Prime Uniqueness}

\author{Peter Cholak}

\address{Department of Mathematics\\
  University of Notre Dame}

\email{cholak@nd.edu}

\author{Charlie McCoy, C.S.C.}

\address{Department of Mathematics\\University of Portland}

\email{mccoy@up.edu}

\thanks{This work was partially supported by a grant from the Simons
  Foundation (\#315283 to Peter Cholak).}

\maketitle

\begin{abstract}
  Assuming the obvious definitions below, we show that a decidable
  model that is effectively prime is also effectively atomic. This
  implies that two effectively prime (decidable) models are computably
  isomorphic. This is in contrast to the theorem that there are two
  atomic decidable models which are not computably isomorphic. We end
  with a section describing the implications of this result in reverse
  mathematics.
\end{abstract}


\section{Introduction}

Our goal is to explore (decidable) prime models from the perspective
of effective model theory, computability theory, and reverse
mathematics.  In particular, we are interested in the result that
prime models are unique.  Similar results have been found before. In
\cite{MR2529915}, Hirschfeldt, Shore and Slaman looked at classical
results involving prime and atomic models from the perspective of
reverse mathematics.  They left open the analysis of the prime
uniqueness theorem.

We begin with a review of the relevant definitions and results from
classical model theory; see \cite{C-K}.  For this review, fix a
complete theory $T$ of a \textit{countable} language $\mathcal{L}$.

\begin{defn} A formula $\varphi(\vec{x})$ is complete if for every
  other formula $\psi(\vec{x})$, exactly one of the following holds:
  $T \vdash \varphi(\vec{x}) \rightarrow \psi(\vec{x})$ or
  $T \vdash \varphi(\vec{x}) \rightarrow \neg \psi(\vec{x})$.

\end{defn}

\begin{defn} A model $\mathcal{A} \models T$ is atomic if for every
  $\vec{a} \in \mathcal{A}$, there is a complete formula
  $\varphi(\vec{x})$ so that $\mathcal{A} \models \varphi(\vec{a})$.

\end{defn}

\begin{defn} A model $\mathcal{A} \models T$ is prime if for every
  other model $\mathcal{M} \models T$, there is an elementary
  embedding of $\mathcal{A}$ into $\mathcal{M}$.

\end{defn}

\begin{thm} (Atomic Uniqueness) If $\mathcal{A}$ and $\mathcal{B}$ are
  countable atomic models of $T$, then
  $\mathcal{A} \cong \mathcal{B}$.
\end{thm}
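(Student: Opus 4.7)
The plan is to build an isomorphism $f\colon \mathcal{A} \to \mathcal{B}$ by a back-and-forth construction. Fix enumerations $\mathcal{A} = \{a_0, a_1, \ldots\}$ and $\mathcal{B} = \{b_0, b_1, \ldots\}$, and construct a chain of finite partial maps $f_0 \subseteq f_1 \subseteq \cdots$ whose domains and ranges exhaust $\mathcal{A}$ and $\mathcal{B}$ respectively, maintaining throughout the invariant $(\mathcal{A}, \vec{a}) \equiv (\mathcal{B}, \vec{b})$ whenever $f_s$ sends the tuple $\vec{a}$ to $\vec{b}$. Alternating stages handle the forth requirement ($a_s \in \mathrm{dom}(f_{2s+1})$) and the back requirement ($b_s \in \mathrm{ran}(f_{2s+2})$); the union $f = \bigcup_s f_s$ is then the desired isomorphism.

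The heart of the construction is a single extension lemma: if $\vec{a}$ in $\mathcal{A}$ and $\vec{b}$ in $\mathcal{B}$ share a common complete formula $\theta(\vec{x})$, then for every $a \in \mathcal{A}$ there is some $b \in \mathcal{B}$ such that $(\vec{a}, a)$ and $(\vec{b}, b)$ share a common complete formula. To prove it, use atomicity of $\mathcal{A}$ to pick a complete $\varphi(\vec{x}, y)$ satisfied by $(\vec{a}, a)$. Since $\mathcal{A} \models \theta(\vec{a}) \wedge \exists y\, \varphi(\vec{a}, y)$, the sentence $\theta(\vec{x}) \to \neg \exists y\, \varphi(\vec{x}, y)$ is refuted in $\mathcal{A}$, hence not provable from $T$; completeness of $\theta$ then forces $T \vdash \theta(\vec{x}) \to \exists y\, \varphi(\vec{x}, y)$. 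Because the elementary invariant gives $\mathcal{B} \models \theta(\vec{b})$, we obtain $\mathcal{B} \models \exists y\, \varphi(\vec{b}, y)$ and pick a witness $b$. Completeness of $\varphi$ then ensures that $(\vec{a}, a)$ and $(\vec{b}, b)$ agree on every formula, re-establishing the invariant. The back step is entirely symmetric, invoking atomicity of $\mathcal{B}$ in place of $\mathcal{A}$.

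To initiate the induction, note that since $T$ is complete the empty tuples vacuously satisfy a common complete ``formula'' (the true sentences of $T$), so $(\mathcal{A}, \varnothing) \equiv (\mathcal{B}, \varnothing)$. I expect the main point requiring care to be the implication $T \vdash \theta(\vec{x}) \to \exists y\, \varphi(\vec{x}, y)$, since this is where atomicity of $\mathcal{A}$ and the precise definition of completeness are both used essentially; after it is established, the rest is standard back-and-forth bookkeeping.
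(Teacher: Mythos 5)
Your proposal is correct and is exactly the argument the paper has in mind: a back-and-forth construction in which the shared complete formula of the matched tuples is maintained as the invariant and atomicity supplies the complete formula used to extend at each step (the paper's proof is just a one-line sketch of this). The key implication $T \vdash \theta(\vec{x}) \rightarrow \exists y\,\varphi(\vec{x},y)$ is justified correctly from the definition of completeness of $\theta$, so nothing is missing.
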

 
\begin{proof} Use a back-and-forth construction with complete formulas
  determining how to extend the partial isomorphism.
\end{proof}

\begin{thm} (Atomic $\Rightarrow$ Prime) If $\mathcal{A}$ is a
  countable atomic model of $T$, then $\mathcal{A}$ is prime.

\end{thm}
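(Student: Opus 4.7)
The plan is to construct an elementary embedding $f : \mathcal{A} \to \mathcal{M}$ by a forth-only back-and-forth argument, using the atomicity of $\mathcal{A}$ at each stage to produce a complete formula that pins down how the next element must be mapped.

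First I would enumerate $\mathcal{A} = \{a_0, a_1, a_2, \ldots\}$ and construct finite partial maps $f_n$ with domain $\{a_0, \ldots, a_{n-1}\}$, maintaining the inductive invariant that the image tuple $(f_n(a_0), \ldots, f_n(a_{n-1}))$ in $\mathcal{M}$ satisfies exactly the same formulas as $(a_0, \ldots, a_{n-1})$ in $\mathcal{A}$. To extend $f_n$, I would apply atomicity to pick a complete formula $\varphi(x_0, \ldots, x_n)$ with $\mathcal{A} \models \varphi(a_0, \ldots, a_n)$; then $\exists x_n \, \varphi(a_0, \ldots, a_{n-1}, x_n)$ holds in $\mathcal{A}$, and by the invariant the corresponding existential statement about $(f_n(a_0), \ldots, f_n(a_{n-1}))$ holds in $\mathcal{M}$. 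Choosing any witness in $\mathcal{M}$ and declaring it to be $f_{n+1}(a_n)$ extends the map. For the base case $n=0$, atomicity gives a complete formula realized by $a_0$, so $\exists x \, \varphi(x)$ is a sentence in $T$ and is realized in $\mathcal{M}$.

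The central step is verifying the invariant is preserved after each extension, and this is where the completeness of $\varphi$ does the work. For any formula $\psi(x_0, \ldots, x_n)$, exactly one of $T \vdash \varphi \to \psi$ or $T \vdash \varphi \to \neg \psi$ holds. Which alternative occurs is determined by the truth value of $\psi(a_0, \ldots, a_n)$ in $\mathcal{A}$; and because $\varphi$ also holds of the new image tuple in $\mathcal{M}$ (which is a model of $T$), the same alternative dictates the truth of $\psi$ on the image tuple. Hence the extended tuple in $\mathcal{M}$ realizes the full type of $(a_0, \ldots, a_n)$, giving the invariant.

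Taking $f = \bigcup_n f_n$ yields a total map whose finite restrictions are elementary, so $f$ itself is an elementary embedding. I expect no serious obstacle: the only substantive move is the existential step at each stage, and it reduces to the same observation throughout, namely that atomicity converts the type of each finite tuple into a single formula whose existential closure lies in $T$ and must therefore be realized in any model of $T$.
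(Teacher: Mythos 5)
Your proposal is correct and is exactly the paper's argument: the ``forth'' half of the back-and-forth construction, where at each step the complete formula for the extended tuple (together with completeness of $T$ for the base case) guarantees a witness in $\mathcal{M}$ and forces the image tuple to realize the full type. The paper states this in one line; your write-up simply fills in the standard details.
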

\begin{proof} Use the ``forth'' half of the back-and-forth argument.
\end{proof}
 
\begin{thm} (Prime $\Rightarrow$ Atomic) If $\mathcal{A}$ is a prime
  model of $T$, then $\mathcal{A}$ is countable and atomic.

\end{thm}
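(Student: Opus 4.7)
The plan is to prove countability and atomicity separately. For countability, I would invoke the downward L\"owenheim--Skolem theorem (or just note that $T$ has a countable model, built by a Henkin construction from the countable language $\mathcal{L}$) to produce a countable model $\mathcal{M} \models T$. Since $\mathcal{A}$ is prime, it embeds elementarily into $\mathcal{M}$, so $|\mathcal{A}| \le |\mathcal{M}| \le \aleph_0$.

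For atomicity, I would argue by contrapositive. Suppose $\mathcal{A}$ is not atomic, so there is some tuple $\vec{a} \in \mathcal{A}$ such that no complete formula $\varphi(\vec{x})$ satisfies $\mathcal{A} \models \varphi(\vec{a})$. Let $p(\vec{x}) = \operatorname{tp}^{\mathcal{A}}(\vec{a})$. The key observation is that $p$ is a non-principal complete type over $T$: if $p$ were generated by some $\varphi(\vec{x}) \in p$, meaning $T \vdash \varphi(\vec{x}) \to \psi(\vec{x})$ for every $\psi \in p$, then for any $\psi$ either $\psi \in p$ or $\neg \psi \in p$ (by completeness of $p$), so $\varphi$ would be a complete formula satisfied by $\vec{a}$, contradicting our assumption.

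Next I would invoke the Omitting Types Theorem: since $p$ is a non-principal type of $T$, there is a countable model $\mathcal{M} \models T$ that omits $p$. But then $\mathcal{A}$ cannot elementarily embed into $\mathcal{M}$, because the image of $\vec{a}$ under any elementary embedding would realize $p$ in $\mathcal{M}$. This contradicts primeness of $\mathcal{A}$, completing the proof.

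The main obstacle, or at least the only non-bookkeeping step, is invoking the Omitting Types Theorem to produce a model omitting $p$; everything else is immediate from the definitions and from the existence of a countable model of $T$. I would simply cite \cite{C-K} for the Omitting Types Theorem rather than reprove it, since this is a classical result and the paper's focus is on the effective versions of these theorems.
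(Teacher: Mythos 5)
Your proposal is correct and follows essentially the same route as the paper: both arguments hinge on the Omitting Types Theorem applied to the type of a tuple, using primeness to force that type to be realized in every model and hence to be principal. Your extra care with the countability claim (via L\"owenheim--Skolem and an elementary embedding into a countable model) is a fine, if routine, addition that the paper's one-line proof leaves implicit.
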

\begin{proof} 
  By the compact theorem $T$ has a countable model.  Therefore
  $\mathcal{A}$ is countable.  Let $\vec{a} \in \mathcal{A}$, and
  consider its type.  For any other model $\mathcal{B}$ of $T$, there
  is an elementary embedding of $\mathcal{A}$ into $\mathcal{B}$, so
  that $\mathcal{B}$ also realizes this type. By the Omitting Types
  Theorem, the type of $\vec{a}$ includes a complete formula.
 
\end{proof}
 
\begin{thm} (Prime Uniqueness) If $\mathcal{A}$ and $\mathcal{B}$ are
  prime models of $T$, then $\mathcal{A} \cong \mathcal{B}$.

  \begin{proof} Immediate by the Atomic Uniqueness and (Prime
    $\Rightarrow$ Atomic).

  \end{proof}

\end{thm}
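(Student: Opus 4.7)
The plan is to deduce this immediately from the two preceding theorems. By the theorem Prime $\Rightarrow$ Atomic, applied separately to $\mathcal{A}$ and to $\mathcal{B}$, each of these prime models is countable and atomic. Atomic Uniqueness then supplies an isomorphism $\mathcal{A} \cong \mathcal{B}$ via a back-and-forth using complete formulas to extend the finite partial isomorphism at each stage. There is really no further combinatorial content to supply; all the work has been done upstream.

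It is worth noting that the main obstacle in this chain sits inside the proof of Prime $\Rightarrow$ Atomic, where the Omitting Types Theorem is invoked to guarantee that every realized type contains a complete formula. Once that is granted, assembling the present corollary is pure bookkeeping.

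As a sanity check, one could instead attempt a direct proof: use the primality of $\mathcal{A}$ to produce an elementary embedding $f \colon \mathcal{A} \hookrightarrow \mathcal{B}$, use the primality of $\mathcal{B}$ to produce $g \colon \mathcal{B} \hookrightarrow \mathcal{A}$, and try to promote these embeddings to a genuine back-and-forth. The difficulty is that neither embedding is \emph{a priori} surjective, and to cover new elements on alternating sides one essentially needs to know that every tuple satisfies a complete formula, i.e.\ atomicity. So this route would reprove Prime $\Rightarrow$ Atomic along the way, and the two-step proof through Atomic Uniqueness is the clean option I would write down.
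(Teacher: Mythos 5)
Your proof is correct and follows exactly the paper's route: apply Prime $\Rightarrow$ Atomic to both $\mathcal{A}$ and $\mathcal{B}$, then invoke Atomic Uniqueness to obtain the isomorphism. Nothing further is needed.
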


In \cite{MR2529915}, the authors showed that (Prime $\Rightarrow$
Atomic) holds in $RCA_0$, and that both (Atomic $\Rightarrow$ Prime)
and Atomic Uniqueness are equivalent to $ACA_0$.  Note that in Reverse
Mathematics models are given by their complete diagram. Recall REC is
the canonical model $RCA_0$ where the second order part is just the
collection of computable sets.  Every model in REC is decidable.
Since the classical proof of Prime Uniqueness uses the latter two
theorems, its effective or non-effective content and its place within
Reverse Mathematics are not answered by these results.  In order to
answer these questions, we consider effective analogues of the
classical definitions.

\begin{defn} \label{eff} Let $T$ be a decidable theory and
  $\mathcal{A}$ a decidable model of $T$.

  \begin{enumerate}

  \item The model $\mathcal{A}$ is effectively prime, if for every
    decidable model $\mathcal{M} \models T$, there is a computable
    elementary embedding $f: \mathcal{A} \rightarrow \mathcal{M}$.
    Note that $f$ need not be uniformly computable in $\mathcal{A}$
    and/or $\mathcal{M}$.

  \item The model $\mathcal{A}$ is effectively atomic if there is a
    computable function $g$ that accepts as an input a tuple $\vec{a}$
    from $\mathcal{A}$ (of any length) and outputs a complete formula
    $\varphi(\vec{x})$ so that $\mathcal{A} \models \varphi(\vec{a})$.
    Again $g$ need not be uniformly computable in $\mathcal{A}$.

  \item The model $\mathcal{A}$ is uniformly effectively prime if
    there is a partial computable function $\Phi$ so that, given 
    a decidable $\mathcal{M} \models T$, $\Phi(\mathcal{M})$ halts and
    outputs the code of a computable elementary embedding
    $f: \mathcal{A} \rightarrow \mathcal{M}$.  Again $\Phi$ need not
    be uniformly computable in $\mathcal{A}$.
  \end{enumerate}

\end{defn}


Some observations:

\begin{enumerate}

\item If two decidable models $\mathcal{A}$ and $\mathcal{B}$ of the
  same decidable theory $T$ are both effectively atomic, then the
  classical back and forth construction produces a computable
  isomorphism $f: \mathcal{A} \cong \mathcal{B}$.

\item A modification of the classical proof that atomic implies prime
  shows that effectively atomic implies uniformly effectively prime.


\end{enumerate}

It is essential to note that the results in \cite{MR2529915} are about
decidable, atomic models, \textit{not necessarily} effectively atomic
models.  To understand this distinction, we state a few easily proven
results.

\begin{prop} Let $T$ be a theory in a computable language
  $\mathcal{L}$.  Then the set $\{\phi$ $|$ $\phi$ is a complete
  formula of $T \}$ is $\Pi_1^T$.

\end{prop}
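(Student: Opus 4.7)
The plan is to exploit the completeness of $T$ in order to reduce provability from $T$ to a $\Delta_1^T$ (i.e.\ $T$-computable) relation, after which the universal quantifier over formulas $\psi$ in the definition of ``complete formula'' gives the $\Pi_1^T$ bound directly.

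First I would argue that for any sentence $\chi$ of $\mathcal{L}$, the relation $T \vdash \chi$ is $\Delta_1^T$. It is $\Sigma_1^T$ essentially by definition: with $T$ as an oracle one enumerates finite subsets of $T$ and, since $\mathcal{L}$ is computable, checks whether any finite conjunction of axioms formally proves $\chi$. Because $T$ is complete and consistent, $T \vdash \chi$ is equivalent to $T \not\vdash \neg \chi$, so $T \vdash \chi$ is also $\Pi_1^T$, hence $\Delta_1^T$. Applying this to the universal closures of the formulas $\varphi \to \psi$ and $\varphi \to \neg \psi$ shows that, for each pair $(\varphi,\psi)$, both provability relations appearing in the definition of completeness are uniformly $T$-computable.

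Unpacking the definition, $\varphi(\vec{x})$ is complete precisely when, for every $\psi(\vec{x})$, exactly one of $T \vdash \varphi \to \psi$ or $T \vdash \varphi \to \neg \psi$ holds. Each instance of the bracketed condition is $T$-decidable by the previous step, so universally quantifying over the computable set of $\mathcal{L}$-formulas $\psi$ produces the desired $\Pi_1^T$ description of the set of complete formulas. I do not foresee a real obstacle; the only subtle point is the use of completeness of $T$ to collapse the naive $\Sigma_1^T$ bound on provability down to $\Delta_1^T$. Without that collapse, a direct reading of the definition would yield only a $\Pi_2^T$ bound (one alternation from $\forall \psi$ over a $\Sigma_1^T$ matrix), so it is worth making explicit in the write-up that completeness of $T$ is doing the work.
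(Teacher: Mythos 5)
Your proof is correct and follows essentially the same route as the paper's (very terse) argument: the paper's instruction to ``check whether, for all other $\psi$, exactly one of $T \vdash (\phi \rightarrow \psi)$ or $T \vdash (\phi \rightarrow \neg\psi)$ holds'' presupposes exactly the collapse you spell out, namely that completeness (and consistency) of $T$ — the standing assumption of that section — makes provability of universal closures decidable relative to $T$, so the outer $\forall\psi$ yields $\Pi_1^T$. Making that use of completeness explicit is a worthwhile clarification rather than a different proof.
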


\begin{proof} Check whether, for all other $\psi$ in $\mathcal{L}$,
  exactly one of $T \vdash (\phi \rightarrow \psi)$ or
  $T \vdash (\phi \rightarrow \neg \psi)$ holds.
\end{proof}

Moreover, this result can actually be sharp, even in a case where the
theory $T$ is atomic, as the following result establishes.

\begin{prop} \label{theory} There is a decidable, atomic theory $T$
  for which the set $\{\phi$ $|$ $\phi$ is a complete formula of
  $T \}$ is $\Pi^0_1$-complete.

\begin{proof}
   
  This follows directly from the construction in the proof of Theorem
  2.3 in \cite{MR2529915}.  But we would like to present a
  modification which will be useful in the proof of
  Lemma~\ref{david}.  We will use the same language and the
  collections of Axioms~2,3,4, and 6, from \cite{MR2529915}.  We will
  replace the collection of Axioms~1 with the axioms that $R_i$ are
  pairwise disjoint sets with exactly 2 distinct elements and
  collection of Axioms~5 with if $\Phi_{i,s}(s) \downarrow$ there is
  exactly \emph{one} $x \in R_i$ such that $R_{i,s}(x)$ and exactly
  one $x \in R_i$ such that $\neg R_{i,s}(x)$. Like in
  \cite{MR2529915} we can show that this theory $T$ is recursive, has
  quantifier elimination, is complete, decidable, and atomic.  The
  complete formulas are $R_i(x)$ iff $\Phi_i(i)$ diverges and
  $R_i(x) \wedge R_{i,s}(x)$ and $R_i(x) \wedge \neg R_{i,s}(x)$ iff
  $\Phi_i(i)$ converges in exactly $s$
  steps.  
  $R_i(x)$ is a complete formula of $T$ iff $i \in
  \overline{K}$.  \end{proof}




\end{prop}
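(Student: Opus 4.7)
The plan is to adapt the construction in Theorem~2.3 of \cite{MR2529915}, where a decidable atomic theory was built whose set of complete formulas has prescribed computability-theoretic complexity, and to tune the parameters so that $\overline{K}$ many-one reduces transparently to the set of complete formulas of $T$. The target reduction should simply be $f(i) = R_i(x)$ for a distinguished family of unary predicates $R_i$, so the axioms must be arranged so that $R_i(x)$ is a complete formula exactly when $\Phi_i(i)$ diverges.

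First I would fix the language to consist of unary predicates $R_i$ and auxiliary predicates $R_{i,s}$ (one for each pair $i, s \in \omega$), together with axioms chosen so that in any model the $R_i$ are pairwise disjoint, each $R_i$ has exactly two elements, and each $R_{i,s}$ is empty on $R_i$ unless $\Phi_{i,s}(i)\downarrow$, in which case $R_{i,s}$ picks out exactly one of the two elements of $R_i$ (reusing Axioms~2,~3,~4,~6 from \cite{MR2529915} unchanged). Then I would verify by a back-and-forth argument on the finite $R_i$-cells that $T$ admits quantifier elimination, is complete and decidable, and that the quantifier-free type of any tuple is isolated by a finite Boolean combination of the $R_i$ and $R_{i,s}$, so that $T$ is atomic.

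Once the structure of $T$ is pinned down, identifying the complete formulas becomes a matter of tracking the $R_i$-cell of a model: if $\Phi_i(i)\uparrow$, then no $R_{j,s}$ predicate distinguishes the two elements of $R_i$, so $R_i(x)$ is complete; if $\Phi_i(i)\downarrow$ at the (unique) stage $s$, then $R_{i,s}$ splits $R_i$ into two singletons, so $R_i(x)$ fails to be complete, while $R_i(x)\wedge R_{i,s}(x)$ and $R_i(x)\wedge \neg R_{i,s}(x)$ each are. The computable map $i\mapsto R_i(x)$ is therefore a many-one reduction from $\overline{K}$ to the set of complete formulas of $T$, witnessing $\Pi^0_1$-completeness.

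The main obstacle I expect is the careful verification of quantifier elimination and atomicity under the modified Axioms~1 and~5, since the interaction between the two-element cells and the stage-indexed predicates must be checked so that no unintended splitting of $R_i$-cells occurs; once that is in hand, decidability of $T$ and the computability of the reduction follow by routine bookkeeping as in the Hirschfeldt--Shore--Slaman construction.
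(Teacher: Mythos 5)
Your proposal is correct and follows essentially the same route as the paper: modify the Hirschfeldt--Shore--Slaman construction so each $R_i$ is a two-element cell split by $R_{i,s}$ exactly when $\Phi_i(i)$ halts at stage $s$, verify quantifier elimination, decidability, and atomicity as in their argument, and use $i \mapsto R_i(x)$ as the many-one reduction from $\overline{K}$ (with the $\Pi^0_1$ upper bound supplied by the preceding proposition).
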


This is in contrast with what occurs when a theory has an effectively
atomic model.

\begin{prop} Let $T$ be a decidable theory and $\mathcal{A} \models T$
  a decidable, effectively atomic model.  Then $\{\phi $ $|$ $\phi$ is
  a complete formula of $T \}$ is computable.
\end{prop}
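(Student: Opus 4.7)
The plan is to combine the previous proposition, which gives a $\Pi^0_1$ upper bound on the set of complete formulas (since $T$ is decidable), with a matching $\Sigma^0_1$ procedure obtained by exploiting the effective atomicity witness $g$. Because $T$ is complete and decidable, the question whether a given $\varphi(\vec{x})$ is consistent with $T$ (equivalently, $T \vdash \exists \vec{x}\, \varphi(\vec{x})$) is decidable, and every consistent $\varphi$ is realized by some tuple $\vec{a}$ in $\mathcal{A}$. Inconsistent formulas will be declared ``not complete,'' since the exactly-one-alternative clause in the definition then fails vacuously.

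Given a consistent $\varphi$, I would search decidably through $\mathcal{A}$ for a realizer $\vec{a}$ of $\varphi$, apply $g$ to obtain a complete formula $\chi(\vec{x}) := g(\vec{a})$ with $\mathcal{A} \models \chi(\vec{a})$, and finally use decidability of $T$ to test whether $T \vdash \varphi \leftrightarrow \chi$. If the test succeeds, declare $\varphi$ complete (it inherits completeness from $\chi$); otherwise declare $\varphi$ not complete. Every step is computable, so this decides membership in the set of complete formulas provided the underlying equivalence is correct.

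The step that needs verification is: $\varphi$ is complete if and only if $T \vdash \varphi \leftrightarrow \chi$. The easy direction is that any $T$-equivalent of a complete formula is itself complete. For the converse, $\mathcal{A} \models \chi(\vec{a}) \wedge \varphi(\vec{a})$ combined with completeness of $\chi$ already forces $T \vdash \chi \to \varphi$; then if $\varphi$ is also complete, applying its defining property with the test formula $\psi := \chi$ forces $T \vdash \varphi \to \chi$ (the alternative $T \vdash \varphi \to \neg \chi$ is ruled out by the same $\vec{a}$). The main conceptual obstacle, then, is essentially just the observation that, under effective atomicity, the semantic fact ``$\vec{a}$ realizes $\varphi$ in $\mathcal{A}$'' can be promoted to the syntactic witness $g(\vec{a})$, letting us transfer decidability from $\mathcal{A}$ and $T$ back to the purely syntactic question of completeness.
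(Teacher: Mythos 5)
Your proposal is correct and follows essentially the same route as the paper's proof: test consistency with $T$, find a realizer $\vec{a}$ in the decidable model $\mathcal{A}$, apply the atomicity function to get a complete formula, and use decidability of $T$ to compare. The only cosmetic difference is that you test the full equivalence $T \vdash \varphi \leftrightarrow \chi$, while the paper notes that $T \vdash \chi \to \varphi$ holds automatically (since $\chi$ is complete and co-realized with $\varphi$) and so only checks the forward implication.
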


\begin{proof} Let $\phi(\vec{x})$ be a formula with the tuple of free
  variables $\vec{x}$ actually occurring in $\phi$.  First, verify
  that $\phi$ is consistent with $T$.  If so, since $\mathcal{A}$ is a
  model of $T$, there is a tuple of elements $\vec{a} \in \mathcal{A}$
  such that $\mathcal{A} \models \phi(\vec{a})$.  Since $\mathcal{A}$
  is effectively atomic, we can effectively find a complete formula
  $\varphi(\vec{x})$ so that $\mathcal{A} \models \varphi(\vec{a})$
  and hence $T \vdash \varphi(\vec{x}) \rightarrow \phi(\vec{x})$.
  Now, we check if
  $T \vdash \phi(\vec{x}) \rightarrow \varphi(\vec{x})$.  If it does,
  then $\phi(\vec{x})$ is a complete formula, because it implies a
  complete formula.  If it does not, then, since
  $T \not \vdash \phi(\vec{x}) \rightarrow \neg \varphi(\vec{x})$,
  $\phi(\vec{x})$ is not a complete formula.
\end{proof}

Throughout the rest of the section and the next, a theory $T$ will always be a
complete and decidable theory of a computable language
$\mathcal{L}(T)$, and all models will be decidable.  Furthermore, we
assume that all theories and models are presented in such a way that
the associated computable language can always be recovered from the
code for the theory or model.  We will often re-state these facts for
emphasis.

Our main result, whose proof is in Section~\ref{proofsec}, is the
following:

\begin{thm} \label{Main} (Effectively Prime $\Rightarrow$ Effectively
  Atomic) Let $T$ be a decidable theory and $\mathcal{A} \models T$ a
  decidable model.  Then either there is a computable function $h$
  witnessing that $\mathcal{A}$ is effectively atomic; or there is a
  decidable $\mathcal{M} \models T$ such that there is no computable
  elementary embedding of $\mathcal{A}$ into $\mathcal{M}$.

\end{thm}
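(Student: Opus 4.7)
The plan is to prove the contrapositive: if no computable $h$ witnesses effective atomicity, then construct a decidable $\mathcal{M} \models T$ admitting no computable elementary embedding from $\mathcal{A}$. The approach is a simultaneous stagewise construction of (i) a partial computable candidate for $h$ and (ii) a decidable Henkin model $\mathcal{M}$, coupled through diagonalization against every partial computable $\Phi_e$.

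For each tuple $\vec{a}\in\mathcal{A}$, I would search for a complete formula $\varphi$ with $\mathcal{A}\models\varphi(\vec{a})$; satisfaction in $\mathcal{A}$ is decidable, and non-completeness is computably enumerable via the $\Pi_1^T$-description of completeness already established. To defeat a potential embedding $\Phi_e$, wait until $\Phi_{e,s}(\vec{a})\downarrow=\vec{b}$ in $\mathcal{M}$, then look for $\psi$ with $\mathcal{A}\models\psi(\vec{a})$ such that adjoining $\neg\psi(\vec{b})$ to $\mathcal{M}$'s diagram remains consistent with $T$ together with the commitments made so far; adding such a $\psi$ permanently breaks $\Phi_e$ as an elementary embedding. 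The Henkin construction of $\mathcal{M}$ must therefore be conservative, committing only what is needed for the Henkin axioms and active requirements, to preserve flexibility for later diagonalization. The coupling is then: if no separating $\psi$ can be found for a pair $(\vec{a},e)$, the finitely many formulas already forced on $\vec{b}$ have pinned down $\vec{a}$'s type, yielding a complete formula that can be assigned to $h(\vec{a})$.

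The main obstacle I anticipate is this coupling: the ``no separating $\psi$'' event must be recognized at a finite stage in a $\Sigma_1$ fashion so that the extracted $h$-value is genuinely computable and not merely $\Delta_2$, and simultaneously the Henkin construction must avoid premature commitments that would preclude future diagonalization. Managing both, so that the run of the construction either resolves every $\vec{a}$ by diagonalization against all $\Phi_e$ threatening it, or else supplies $h(\vec{a})$ computably, is where the technical heart of the argument should lie; I would expect this to hinge on a careful stage-by-stage priority balancing that exploits the decidability of both $T$ and $\mathcal{A}$.
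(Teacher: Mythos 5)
Your overall architecture is the same as the paper's: build a decidable Henkin model $\mathcal{M}$ while trying to defeat each computable $\Phi_e$ as an elementary embedding, and, when some $\Phi$ cannot be defeated, read off $h(\vec a)$ as the pullback along $\Phi$ of the finitely many commitments made so far (existentially quantifying out the Henkin constants outside $ran(\Phi)$). But the difficulty you flag in your last paragraph is a genuine gap, and your sketch contains no mechanism that closes it. The value $h(\vec a)$ must be emitted at a finite stage, say as $\exists\vec y\,\theta_s(\vec y,\vec x)$ where $\theta_s$ is the conjunction of sentences committed so far; the event ``no separating $\psi$ exists'' is co-c.e.\ and is never confirmed, so completeness of that output cannot be certified by waiting, and an unbounded search for a separating $\psi$ would destroy the computability of the construction of $\mathcal{M}$. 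Worse, with only your diagonalization move (add $\neg\psi(\Phi(\vec a))$ when it is consistent with the current commitments), the ordinary completeness steps of the Henkin construction --- which must eventually decide \emph{every} sentence of the expanded language, including sentences involving Henkin constants outside $ran(\Phi)$ --- can silently strengthen the constraints on $\Phi(\vec a)$ in ways not implied by an already-emitted value $h(\vec a)$, while never leaving a separating formula for a bounded search to catch. The outcome can be a run in which $\Phi$ is never defeated, yet some committed values of $h$ are not complete formulas; nothing in your scheme rules this out.

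The paper's resolution is structural rather than recognitional: the diagonalization is folded into the algorithm that decides each sentence $\sigma_e$, and it has a second, crucial move beyond the one you propose. Condition (a) of Step (6) is essentially your move (choose the truth value $\gamma$ whose pullback $\exists\vec y(\theta\wedge\gamma)(\vec y,\vec a)$ fails in $\mathcal A$). Condition (b) handles the case where both truth values pull back consistently: if $T\vdash\exists\vec x\,[\exists\vec y\,\theta(\vec y,\vec x)\wedge\forall\vec y(\theta(\vec y,\vec x)\rightarrow\gamma(\vec y,\vec x))]$, the construction commits $\gamma\wedge\forall\vec y(\theta(\vec y,\Phi(\vec a))\rightarrow\gamma(\vec y,\Phi(\vec a)))$; since $\vec a$ has witnesses in $\mathcal A$ for both $\theta\wedge\gamma$ and $\theta\wedge\neg\gamma$, this universal commitment makes $\Phi$ non-elementary. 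The payoff, proved by induction in the paper's long lemma, is the invariant $T\vdash\phi(\vec x_t)\rightarrow\exists\vec y_u\,\theta_u(\vec y_u,\vec x_t)$ for all $u\geq t$: if a later decision were \emph{not} already implied by the formula output at stage $t$, condition (b) would have fired at that later stage (for the relevant $\neg\gamma$) and $R_\Phi$ would have been completely satisfied, a contradiction. That invariant is exactly the finite-stage, $\Sigma_1$-style guarantee you were looking for, obtained without ever recognizing the $\Pi_1$ event ``no separating formula exists''; it, together with the priority and initialization bookkeeping that governs when the $h_\Phi$ are defined, reset, and extended to longer initial segments of $|\mathcal A|$, is what your proposal is missing, and without it the extracted $h$ need not output complete formulas.
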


\begin{cor} \label{main} (Effective Prime Uniqueness) Let $T$ be
  decidable and $\mathcal{A},\mathcal{B} \models T$ be decidable
  models.  Then either there is a computable isomorphism
  $h: \mathcal{A} \cong \mathcal{B}$; or there is a decidable
  $\mathcal{M} \models T$, so that either there is no computable
  elementary embedding of $\mathcal{A}$ into $\mathcal{M}$, or there
  is no computable elementary embedding of $\mathcal{B}$ into
  $\mathcal{M}$.
\end{cor}


By the first observation after Definition~\ref{eff},
Theorem~\ref{Main} implies Corollary~\ref{main}.  By the second
observation, effectively prime, effectively atomic, and uniformly
effectively prime are all equivalent.

Moreover, by looking carefully at the construction in
Section~\ref{proofsec}, we can see that there is actually a greater
degree of uniformity to Theorem ~\ref{Main}, as stated in the next
result.  Note that a code for a decidable model $\mathcal{A}$ is a
Turing machine that computes the complete diagram of $\mathcal{A}$.
Moreover, recall that, by assumption, the presentation for
$\mathcal{A}$ includes the computable language for $\mathcal{A}$.  So
from a decidable model it's theory can be computably recovered.  Thus,
$T$ need not be an input into the functional $\Psi$ below.  However,
the proposition following the result shows that the input of the $e$
is necessary.

\begin{cor} \label{three} There a Turing functional
  $\Psi(\mathcal{A},e)$ such that if $\mathcal{A}$ is a decidable
  model and $T$ is its decidable theory, then either for some $e$,
  $\Psi(\mathcal{A},e)$ is a code for a computable function witnessing
  that $\mathcal{A}$ is effectively atomic; or there is a decidable
  $\mathcal{M} \models T$, such that there is no computable elementary
  embedding of $\mathcal{A}$ into $\mathcal{M}$.

\end{cor}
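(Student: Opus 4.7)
The plan is to extract the uniformity directly from the proof of Theorem~\ref{Main} by tracking how that construction depends on its inputs. I would first re-examine that proof and observe that it accesses $\mathcal{A}$ only through its complete diagram (as an oracle) and that all of its combinatorial steps are effective in that oracle. So the procedure is already essentially uniform in $\mathcal{A}$; the only question is what role the auxiliary parameter $e$ must play.

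The natural interpretation is that $e$ is a candidate Turing index for the atomicity function $g$ itself, supplied in advance so that the construction has access to a code for $g$ without needing to invoke the recursion theorem internally. With $e$ fixed, I would define $\Psi(\mathcal{A},e)$ to simulate the proof of Theorem~\ref{Main} with oracle $\mathcal{A}$, using $\Phi_e$ wherever the argument refers to the atomicity witness being constructed. The output is a Turing index for the computable function that, on input $\vec a$ from $\mathcal{A}$, produces a complete formula true of $\vec a$.

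To verify the dichotomy, I would apply the recursion theorem to the map $e \mapsto \Psi(\mathcal{A},e)$ to obtain a fixed point $e^{*}$ with $\Phi_{e^{*}} = \Phi_{\Psi(\mathcal{A},e^{*})}$. At this $e^{*}$ the construction is self-consistent, so either it successfully produces a genuine atomicity witness---giving the first alternative of the corollary---or, from the specific way the construction fails at $e^{*}$, the proof of Theorem~\ref{Main} yields a decidable $\mathcal{M} \models T$ with no computable elementary embedding from $\mathcal{A}$, which is the second alternative. Crucially, the failure branch needs to be shown to depend only on $\mathcal{A}$ and not on $e^{*}$, so that the ``or'' clause is a genuine existence claim about $\mathcal{M}$.

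The main obstacle will be confirming that the proof of Theorem~\ref{Main} really does admit this single-parameter formulation: that exactly one finite piece of non-uniformity (the self-referential index $e$) is needed, and that the adversarial construction of $\mathcal{M}$ in the failure case is uniform in $\mathcal{A}$ alone. I expect this to be essentially bookkeeping once the argument of Theorem~\ref{Main} is unpacked, and the lemma following the corollary---asserting that $e$ cannot be dropped---reinforces that exactly this single parameter is both necessary and sufficient.
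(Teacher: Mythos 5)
There is a genuine gap: you have mislocated the non-uniformity that the parameter $e$ is meant to absorb. In the paper's proof of Corollary~\ref{three}, the construction of Theorem~\ref{Main} is run exactly once, uniformly in (the diagram of) $\mathcal{A}$; it takes no auxiliary parameter, and the atomicity witness is never an \emph{input} to it --- the functions $h_{\Phi,t}$ are outputs, built stage by stage. The parameter $e$ codes a guess at precisely the data isolated in Remark~\ref{remark}: a requirement $R_{\Phi}$ that requires attention infinitely often, the stage $s^{*}$ after which no higher-priority requirement first becomes completely satisfied (and the relevant sentences have been decided), and the first stage $s \geq s^{*}$ at which $R_{\Phi}$ requires attention. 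Given a correct such guess, $\Psi(\mathcal{A},e)$ simply outputs $h = \bigcup_{t \geq s} h_{\Phi,t}$, which the verification already shows is an atomicity witness; if no correct guess exists, then every requirement requires attention only finitely often, and the single model $\mathcal{M}$ --- built from $\mathcal{A}$ alone, independently of $e$ --- realizes the second alternative. No recursion theorem is involved, and your worry that the failure branch might depend on $e$ does not arise, exactly because $e$ is not an input to the construction of $\mathcal{M}$.

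Moreover, your fixed-point scheme cannot be repaired. First, the instruction ``use $\Phi_e$ wherever the argument refers to the atomicity witness being constructed'' has no referent: the proof of Theorem~\ref{Main} never consults a candidate atomicity witness; it diagonalizes against candidate elementary embeddings $\Phi$, whose defeat (``complete satisfaction'') is a finitary, computably checkable condition, and nothing analogous lets you convert a defect of a purported atomicity witness into an adversarial $\mathcal{M}$. Second, if your argument did go through, the fixed point $e^{*}$ would be found effectively from a code for $\mathcal{A}$, so absorbing it would yield a single functional $\Psi'(\mathcal{A}) := \Psi(\mathcal{A},e^{*})$ with the property that whenever no adversarial $\mathcal{M}$ exists --- in particular whenever $\mathcal{A}$ is effectively atomic, hence effectively prime --- $\Psi'(\mathcal{A})$ is a genuine atomicity witness. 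That is exactly the ``uniformly effectively atomic'' notion that Lemma~\ref{four} shows to be vacuous (and, tellingly, that lemma is where the recursion theorem actually appears in the paper, used in the opposite direction). So the single, effectively chosen parameter you propose cannot exist: the ``for some $e$'' in the corollary is an honest, non-effective existential over guesses about the limiting behavior of the construction, which is the content Lemma~\ref{four} is designed to highlight.
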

 
\begin{prop} \label{four} For all $\Psi$, there in an effectively
  atomic $\mathcal{A}$ such that $\Psi(\mathcal{A})$ does not witness
  that $\mathcal{A}$ is effectively atomic.
\end{prop}
  \begin{proof}
    Fix $\Psi$.  By the Recursion Theorem we can assume that we know
    the index $e$ of the model $\mathcal{A}$ we construct.  We will
    work in the language of infinitely unary relations, $U_i$, and our
    model has $\omega$ as its domain.
    
    We are only concerned about the case where $\Psi(e)$ itself is the
    code of a computable function $g$ that accepts tuples of
    $\mathcal{A}$ as inputs and outputs formulas in the language of
    $\mathcal{A}$; in particular, $g$ should accept the 1-tuple $0$.
    
    If at stage $s$, $(\Psi_s(e))_s(0)$ does not halt, then we declare
    $U_s$ to be empty.  If $s$ is the first stage at which
    $(\Psi_s(e))_s(0)$ halts, and it is not (the code of) a formula
    with one free variable, then we declare $U_i$ to be empty for all
    $i$.  If $s$ is the first stage at which $(\Psi_s(e))_s(0)$ halts,
    and it is a formula with one free variable, then let $l$ be the
    least number such that $l \geq s$ and $l \geq j$ for any $j$ where
    $U_j$ is mentioned in this formula. The evens go into $U_{l+1}$
    and the odds stay out. For all $i\leq l$ and $i > l+1$, we declare
    $U_i$ to be empty.

    The resulting $\mathcal{A}$ is the infinite model where either
    there is nothing in any $U_i$; or for $l+1$, there is nothing in
    any $U_i$ for $i \neq l+1$, and $U_{l+1}$ splits the domain into
    evens and odds.  The complete formulas for $1$-types are either
    $x= x$ (for everything) or the pair $U_{l+1}(x)$ (for evens) and
    $\neg U_{l+1}(x)$ (for odds). So $\mathcal{A}$ is effectively
    atomic. However, $(\Psi_s(e))_s(0)$ is certainly not a complete
    formula for the element $0$, because on the $U_j$ mentioned in
    $(\Psi_s(e))_s(0)$, the element $0$ and the element $1$ agree, but
    they disagree on
    $U_{l+1}$. 
  \end{proof}

Hence the ``obvious'' notion of ``uniformly effectively atomic'' is
vacuous.

Finally, we should note that the construction given in the next
section does not depend on knowing ahead of time if the model
$\mathcal{A}$ is infinite or finite.  But it was most likely already
known that Corollary~\ref{three} and Proposition~\ref{four} hold for finite
models.

For instance, for Corollary~\ref{three}, let $e$ code a ``guess'' at
$n$, the size of $|\mathcal{A}| = a_0, a_1, \ldots a_n = \vec{a}$, and
a ``guess'' at the number $l$ of distinct automorphisms of
$\mathcal{A}$ (something less than or equal to $n!$).  Then enumerate
the full diagram of $\mathcal{A}$ until formulas are found that reveal
why the other $n!$ - $l$ permutations of the universe are not
automorphisms.  Let $\Theta(\vec{a})$ be the conjunction of everything
enumerated by this stage. $\Theta(\vec{x})$ is the complete formula
for $\vec{a}$. The complete formula for smaller tuples can be found by
quantifying out certain constants.  Of course, if $e$ codes wrong
guesses about $n$ and $l$ -- or if $\mathcal{A}$ is not, in fact,
finite -- then the formula $\Theta(\vec{x})$ output is not correct.
But if $\mathcal{A}$ is, in fact, finite, then one of the $e$ will
encode correct guesses for $n$ and $l$, and then it outputs a correct
$\Theta(\vec{x})$.

To define a $\Psi(\mathcal{A}, e)$ that works uniformly for both
finite and infinite $\mathcal{A}$, we need the construction given in
the next section.  The above paragraph is intended only to acknowledge
that a much easier functional $\Psi$ works for all finite
$\mathcal{A}$.

For Proposition~\ref{four}, we let our domain be $\{0,1\}$, use $0$ in
place of the evens and $1$ in place of the odds to get a finite atomic
model.

\section{Proof of Theorem~\ref{Main} and its Corollaries }

\label{proofsec}

\subsection{Reference and Conventions} This section builds on the
write-up of the Effective Completeness Theorem given in Harizanov's
survey paper in the Handbook of Recursive Mathematics,
\cite{MR1673621}.  However, we change some of the notations used there
to fit the extra parts of our construction more naturally.

We use a Henkin Construction.  Let
$C = \{c_0, c_1, c_2, \ldots, c_n, \ldots\}$ be the set of new
constants not in the language $\mathcal{L}(T)$.  Let
$\{\sigma_{e}: e \in\omega\}$ be a computable enumeration of the set
of all sentences in the language $\mathcal{L}(T) \cup C$.  (We will
assume some technical things about how these sentences are enumerated,
e.g., about the appearance of the constants of $C$; see below.)

We will effectively enumerate a complete
$(\mathcal{L}(T) \cup C)$-theory $\Gamma \supset T$.  This theory
will, as usual, have Henkin witnesses, so that the desired model
$\mathcal{M}$ has a universe consisting of equivalence classes of the
constants in $C$, where $c_i \equiv c_j$ iff $(c_i = c_j) \in \Gamma$.
Of course, technically, as a model of $T$, our final model is just the
reduct of $\mathcal{M}$ to the language of $\mathcal{L}(T)$.

We computably enumerate $\Gamma$ as
$\{\delta_0, \delta_1, \ldots, \}$, where we enumerate $\delta_s$ at
some point during stage $s$ of the construction.  We denote
$\delta_0 \wedge \ldots \wedge \delta_s$ by $\theta_s(\vec{c}_s$),
where $\vec{c}_s$ is the tuple of all \textit{constants of $C$}
mentioned in the conjunction.

As we enumerate the $\delta_s$ into $\Gamma$, we have to do more than
ensure that $\Gamma$ is a complete diagram that contains $T$ and has
Henkin witnesses.  There are two major additional components to our
construction that must be incorporated.  First, for each computable
function $\Phi$, we try to diagonalize against $\Phi$ being an
elementary embedding of $\mathcal{A}$ into $\mathcal{M}$; if we can
succeed for all $\Phi$, then we will have proven the theorem.  To this
end, we fix, as is standard, a computable enumeration of all
computable functions $\Phi$. Second, for each $\Phi$, if it looks as
though we are failing at all attempts to diagonalize against this
function, then we computably construct, in stagewise fashion, what we
hope will be a computable $h_{\Phi}$ witnessing that $\mathcal{A}$ is
effectively atomic.  When there is no ambiguity, we will drop the
$\Phi$ subscript on $h$.

  Just as in Harizanov's proof of the Effective Completeness Theorem,
  the model $\mathcal{M}$ is really not defined until after the
  stagewise construction is complete, when we can define the
  equivalence classes according to the set $\Gamma$.  Nevertheless
  $\mathcal{M}$ will still be decidable, with either a finite universe
  or an infinite, computable universe, although we cannot say which
  ahead of time.  Therefore, it will be more convenient to conceive of
  the Turing function $\Phi$ as having range not in the universe of
  $\mathcal{M}$ but in the set $C$ of new constants
  $c_1, c_2, \ldots, c_n, \ldots$.  This should not create any
  problems, because using our enumeration of $\Gamma$, there is an
  effective way of converting in either direction between a function
  $\Phi: \mathcal{A} \rightarrow C$ and a function
  $\Phi': \mathcal{A} \rightarrow \mathcal{M}$.  (Given $\Phi$, and
  $a \in \mathcal{A}$, we define $\Phi'(a) := [\Phi(a)]$.  Given
  $\Phi'$, and $a \in \mathcal{A}$, we search, using $\Gamma$, for the
  least element $c$ in the equivalence class $\Phi'(a)$ and define
  $\Phi(a):= c$.) In fact, in our requirements below, we refer to
  $\Phi'$ as the obvious effective translation of $\Phi$.  Finally,
  for convenience, we assume that for all $\Phi$,
  $dom(\Phi) \subseteq |\mathcal{A}|$, the computable universe of
  $\mathcal{A}$.  (That is, we simply ignore whatever is in
  $dom(\Phi) - |\mathcal{A}|$.)

Recall that the standard enumeration of Turing computations of the
form $\Phi_s(a) \downarrow = c$ is such that $a, c < s$.  In our
enumeration of the sentences in $\Gamma$, we will make sure that at
least the constants $c_0, \ldots, c_s$ all appear in $\vec{c}_s$.
This will ensure, simply as a matter of notational convenience, that
no Turing computation produces an \textit{output} (thought of as a
member of $C$) that hasn't been at least technically mentioned
already.  (Again, this is just a matter of convenience.)  Also, we
assume that the enumeration of $\sigma_e$ is such that all of the
constants which appear in $\sigma_e$ are among $c_0, \ldots, c_e$.
Because of how and when we decide to enumerate sentences or their
negations into $\Gamma$, these conventions will ensure that
$\vec{c}_s = c_0, c_1, \ldots c_s$.

Finally, throughout much of the construction, variables are going to
be substituted for constants, and vice-versa, in many formulas; and we
are going to have to consider carefully which constants appearing in a
formula are already in the range of a particular $\Phi_s$ and which
are not.  For instance, $c_1$ may be a constant appearing in the
formula $\varphi$, a fact we denote by writing $\varphi(c_1)$.  If the
variable $x_1$ does not appear in $\varphi$, and we form the new
formula by replacing every appearance of $c_1$ in $\varphi$ with
$x_1$, we will simply write $\varphi(x_1)$ for this new formula.
Similarly, if $\vec{a} = dom(\Phi_s)$, and we break up the tuple
$\vec{c}_s$ into the sub-tuples
$\vec{c}_{s} - \Phi_s(\vec{a}), \Phi_s(\vec{a})$, then when we write
$\theta_s(\vec{c}_s)$ as
$\theta_s(\vec{c}_{s} - \Phi_s(\vec{a}), \Phi_s(\vec{a}))$, we DO NOT
mean to suggest any deep or complex re-arrangement of the constants
within the sentence.  And lastly, as is the convention with free
variables, if we write something like $\sigma_e(\vec{c}_e)$, we mean
to signify that all of the constants of $C$ appearing in $\sigma_e$
are among $\vec{c}_e$, and NOT to signify that all of these constants
do, in fact, appear in $\sigma_e$.

\subsection{A requirement $R_{\Phi}$ requiring attention}

For each Turing function $\Phi$, we have the requirement $R_{\Phi}$:

\begin{itemize}

\item[] $\neg (\Phi': \mathcal{A} \prec \mathcal{M})$; OR

\item[] there is a computable function $h_{\Phi}$ with the following
  properties:

  \begin{enumerate}

  \item the pairs in the graph of $h_{\Phi}$ are of the form
    $(\vec{a}, \varphi(\vec{x}))$, where $\vec{a} \in \mathcal{A}$,
    $\varphi(\vec{x})$ is a complete formula (relative to $T$), and
    $\mathcal{A} \models \varphi(\vec{a})$.

  \item for each $\vec{a} \in \mathcal{A}$, $\vec{a}$ is a sub-tuple
    of a tuple $\vec{a}'$ that appears in the domain of $h_{\Phi}$.

  \end{enumerate}
   
\end{itemize}

For each requirement $R_{\Phi}$, we refer to the \textit{index} of the
requirement and the index of $\Phi$ interchangeably.  As usual, one
requirement is higher priority than another if its index is lower.
Recall from the previous section that $\mathcal{M}$ is a reduct from a
Henkin construction built with new constants $c_0, c_1 \ldots $ and 
$\Phi'(a) = [ \Phi(a) ] $.

Note: Because of the conditions above for the function $h_{\Phi}$,
from $h_{\Phi}$ we could automatically construct a computable function
$g$ that accepts any tuple $\vec{a}$ from $\mathcal{A}$ and outputs a
complete formula satisfied by $\vec{a}$.  Given $\vec{a}$, by the
second condition, find a tuple $\vec{a}'$ in the domain of $h$ with
$\vec{a} \subseteq \vec{a}'$, and let $h(\vec{a}')$ be
$\varphi(\vec{x}')$.  By the first condition,
$\mathcal{A} \models \varphi(\vec{a}')$.  Consider $\varphi(\vec{a}')$
as $\varphi(\vec{a}, \vec{a}' - \vec{a})$, let $\vec{x}$ be a tuple of
new variables of the same length as $\vec{a}$, and let $\vec{y}$ be a
tuple of new variables of the same length as $\vec{a}' - \vec{a}$.
Then it is quickly verified that
$\exists \vec{y} \varphi(\vec{x}, \vec{y})$ is a complete formula
satisfied by $\vec{a}$.

\begin{defn} A requirement of the form $R_{\Phi}$ is
  \textit{completely satisfied by stage $s$} if AT LEAST ONE of the
  following two conditions holds:

  \begin{enumerate}

  \item $\Phi_s$ is not 1-1; OR

  \item If $\vec{a} = dom(\Phi_s)$, and we look at
    $\theta_{s}(\vec{c}_{s})$ as
    $\theta_s(\vec{c}_{s} - \Phi_s(\vec{a}), \Phi_s(\vec{a}))$, and
    $\vec{y}$ is a tuple of new variables (not appearing among the
    variables in $\theta_{s}(\vec{c}_{s})$) of the same length as
    $\vec{c}_{s} - \Phi_s(\vec{a})$, then
    $\mathcal{A} \not \models \exists \vec{y} \theta_s(\vec{y},
    \vec{a})$.
    (Note: the substitution of $\vec{a}$ for $\Phi_s(\vec{a})$ is
    unambiguous, because, if the first condition does not hold, then
    $\Phi_s$ is assumed to be 1-1.)

  \end{enumerate}

\end{defn}

It is important for the reverse mathematics to note that a requirement
$R_\Phi$ being completely satisfied by stage $s$ is a computable
condition.  Therefore, a requirement $R_\Phi$ eventually becoming
completely satisfied is a $\Sigma_1$ condition.

\begin{defn} The stage $s$ approximation to $h_{\Phi}$ is denoted by
  $h_{\Phi, s}$ (or just $h_s$, if we're dropping the function
  subscripts).  To initialize the stage $s - 1$ approximation
  $h_{s-1}$ at stage $s$ simply means to re-define it to be equal to
  $\emptyset$. 

\end{defn}

Since our construction informally involves substages, it might be the
case $h_{\Phi, s-1}$ is initialized at a substage of stage $s$ and at a
later substage of stage $s$ redefined to be nonempty.

\begin{defn} A requirement of the form $R_{\Phi}$ requires attention
  at stage $s$ if

  \begin{enumerate}

  \item $R_{\Phi}$ is not completely satisfied by stage $s$;

  \item $\Phi_s$ has converged on at least the input $a_0$, and one of
    the following is true:

    \begin{itemize}

    \item $h_{\Phi, s-1} = \emptyset$ or has been initialized at this
      stage $s$, and $\Phi_s(a_0)\downarrow$; OR

    \item $\Phi_s$ has converged on $k$ inputs in the domain of
      $\mathcal{A}$, and $T \vdash \tau$, where $\tau$ expresses that
      there exist $k$ distinct elements and there don't exist $k+1$
      distinct elements; OR
    
    \item $h_{\Phi, s-1} \not = \emptyset$ and has not been
      initialized at this stage $s$; and the domain of $\Phi_s$
      contains an initial segment of the universe of $\mathcal{A}$
      that includes all of the tuples in $dom(h_{\Phi,s-1})$ and at least
      one more element.

    \end{itemize}

  \end{enumerate}

\end{defn}

\subsection{Construction}

\noindent Stage 0:

$\delta_{0} := (c_0 = c_0)$.  All functions
$h_{\Phi, 0} := \emptyset$.

\vspace{1em}

\noindent Stage $s = 2k + 1$ for $k \in \omega$ (Henkin witness
requirement):

Case 1: $\delta_{k} = \exists x \gamma (x) \wedge \tau$, where $\tau$
is a possibly empty conjunction of sentences of the form
$(c_i = c_i)$.  By convention, we know that the first element of $C$
that does not appear in $\theta_{s-1}(\vec{c}_{s-1})$ is $c_s$.
Define $\delta_{s} := \gamma(c_s) \wedge (c_s = c_s)$.

Case 2: Otherwise.  Define $\delta_{s} := (c_s = c_s)$.

\vspace{1em}

\noindent Stage $s = 2k + 2$ for $k \in \omega$ (Completeness of the
diagram requirement):

This portion of the construction, dedicated to the determination of
$\delta_s$ at a positive even stage, employs an algorithm with a
``loop'' structure (that always terminates; see below).  Notice that
each step of the algorithm is computable.

Let $e$ be the least $e$ for which we have not explicitly decided
whether to add $\sigma_{e}$ or $\neg \sigma_{e}$ to $\Gamma$; i.e., at
no previous stage $t$ did
$\delta_{t} := \sigma_{e} \wedge (c_t = c_t)$ or
$\delta_{t} := \neg \sigma_{e} \wedge (c_t = c_t)$.  We will work to
make this determination at this stage, unless the complete
satisfaction of a higher priority requirement $R_{\Phi}$ forces us to
decide a different statement.

\subsubsection{Algorithm}

\begin{enumerate}

\item Set $\sigma^{*} := \sigma_e$ and $i^{*} := e$.

\item Determine if the following is true: for $\gamma = \sigma^{*}$ or
  for $\gamma = \neg \sigma^{*}$, if $\vec{x}$ is a tuple of new
  variables (not appearing among the variables in
  $\theta_{s-1}(\vec{c}_{s-1}) \wedge \gamma(\vec{c}_{s-1})$) of the
  same length as $\vec{c}_{s-1}$, then
  $T \vdash \forall \vec{x}[(\theta_{s-1}(\vec{x}) \rightarrow
  \gamma(\vec{x})]$.

\item If it is true for either $\gamma = \sigma^{*}$ or for
  $\gamma = \neg \sigma^{*}$, then only this $\gamma$ is consistent
  with $T$ and $\theta_{s-1}(\vec{c}_{s-1})$.  Define
  $\delta_{s} := \gamma \wedge (c_s = c_s)$ and exit the algorithm.
  Otherwise, then each of $\sigma^{*}$ and $\neg \sigma^{*}$ is
  consistent with $T$ and $\theta_{s-1}(\vec{c}_{s-1})$, so proceed to
  the next step.

\item Determine if there is any requirement $R_{\Phi}$ with index
  $\leq i^{*}$ that has not been completely satisfied up to this point
  in stage $s$. (Recall that a requirement can become completely
  satisfied at a given stage simply by the computation revealing
  $\Phi$ is not 1-1.  Also recall that a requirement being completely
  satisfied by stage $s$ is a computable condition.)

\item If there is no such requirement, then define
  $\delta_s := \sigma^{*} \wedge (c_s = c_s)$, and exit the algorithm.
  Otherwise, proceed to the next step.

\item \textit{For each} function $\Phi$ associated with a requirement
  that has not been completely satisfied and has index $\leq i^{*}$,
  complete the following analysis:

  \begin{itemize}

  \item Let $\vec{a} = dom(\Phi_s)$.  (Recall that, by the conventions
    we mentioned above, $ran(\Phi_s) \subseteq \vec{c}_{s-1}$ and all
    of the constants appearing in $\sigma_{e}$ are among
    $\vec{c}_{s-1}$, as well.)

  \item Determine if one of the following conditions hold:

    \begin{enumerate}

    \item For $\gamma = \sigma^{*}$ or for $\gamma = \neg \sigma^{*}$,
      if we look at $\theta_{s-1}(\vec{c}_{s-1}) \wedge \gamma$ as
      $\rho(\vec{c}_{s-1} - \Phi_s(\vec{a}), \Phi_s(\vec{a}))$, and if
      $\vec{y}$ is a tuple of new variables (not appearing among the
      variables in $\theta_{s-1}(\vec{c}_{s-1}) \wedge \gamma$) of the
      same length as $\vec{c}_{s-1} - \Phi_s(\vec{a})$, then
      $\mathcal{A} \not \models \exists \vec{y} \rho(\vec{y},
      \vec{a})$.

      (\textbf{COMMENT}: Each of the sentences, $\sigma^{*}$ and
      $\neg \sigma^{*}$, are consistent with $T$ and
      $\theta_{s-1}(\vec{c}_{s-1})$, but one of them would make it
      impossible for $\Phi'$ to be an elementary embedding.)

    \item The previous condition does not hold, but for
      $\gamma = \sigma^{*}$ or for $\gamma = \neg \sigma^{*}$, if

      \begin{itemize}

      \item we look at $\theta_{s-1} \wedge \gamma$ as the formula
        $\theta_{s-1}(\vec{c}_{s-1} -
        \Phi_{s}(\vec{a}),\Phi_{s}(\vec{a})) \wedge
        \gamma(\vec{c}_{s-1} - \Phi_s(\vec{a}), \Phi_s(\vec{a}));$

      \item $\vec{x}$ is a tuple of new variables of the same length
        as $\Phi_s(\vec{a})$;

      \item $\vec{y}$ is a tuple of new variables of the same length
        as $\vec{c}_{s-1} - \Phi_{s}(\vec{a})$,

      \end{itemize}

      then
      $T \vdash \exists \vec{x} [\exists \vec{y}(\theta_{s-1}(\vec{y},
      \vec{x})) \wedge \forall\vec{y}(\theta_{s-1}(\vec{y}, \vec{x})
      \rightarrow \gamma(\vec{y}, \vec{x}))]$.

      (\textbf{COMMENT}: Since condition a) doesn't hold, we know
      that, based on what has been declared so far in $\theta_{s-1}$,
      each of $\sigma^{*}$ and $\neg \sigma^{*}$ is consistent with
      $\vec{a} \mapsto \Phi'(\vec{a})$ as part of a potential
      elementary embedding.  However, in this case, $T$ guarantees
      that there is a tuple $\vec{x}$ of elements which satisfies the
      existential statements necessary to be consistent with
      $\theta_{s-1}$, but which can accommodate only one of
      $\sigma^{*}$ or $\neg \sigma^{*}$.  Therefore, defining the
      non-trivial part of $\delta_{s}$ to be
      $\gamma \wedge \forall \vec{y} (\theta_{s-1}(\vec{y},
      \Phi_{s}(\vec{a})) \rightarrow \gamma (\vec{y},
      \Phi_s(\vec{a})))$.
      This would make it impossible for $\Phi'$ to be an elementary
      embedding since now the types of $\vec{a}$ in $\mathcal{A}$ and
      $\Phi'(\vec{a})$ in $\mathcal{M}$ are different.

    \end{enumerate}

  \end{itemize}

\item If all of the functions $\Phi$ that are considered don't satisfy
  any of the above conditions, then define
  $\delta_{s} := \sigma^{*} \wedge (c_s = c_s)$, and exit the
  algorithm.  Otherwise, proceed to the next step.

\item REDEFINE $i^{*}$ to be the index of the highest priority
  requirement that was considered and satisfies one of conditions a)
  or b) under the second bullet of step (6).  In the rest of the
  steps, $\Phi$ refers specifically to the Turing function for this
  requirement.

\item If the function satisfies Step (6) condition a), then, for the
  appropriate $\gamma$ that makes the condition satisfied (either
  $\sigma^{*}$ or $\neg \sigma^{*}$, and there is no ambiguity which),
  define $\delta_{s} := \gamma \wedge (c_s = c_s)$; and exit the
  algorithm.  Otherwise, proceed to the next step.

\item If the function satisfies condition b), then it is possible that
  the satisfaction could be due to either $\gamma = \sigma^{*}$ or
  $\gamma = \neg \sigma^{*}$; if this is the case, show (arbitrary)
  preference for $\gamma = \sigma^{*}$; if not, then the $\gamma$ that
  makes the condition satisfied is unambiguous.  Now, for this
  $\gamma$, REDEFINE
  $\sigma^{*} := \gamma \wedge \forall \vec{y} (\theta_{s-1}(\vec{y},
  \Phi_{s}(\vec{a})) \rightarrow \gamma (\vec{y}, \Phi_s(\vec{a})))$.
  And, with this new index $i^{*}$ and this new $\sigma^{*}$, return
  to the second step of the algorithm.

  (\textbf{COMMENT}: Why redefine $\sigma^{*}$ instead of just
  defining $\delta_s$ to be the conjunction of this new $\sigma^{*}$
  and $(c_s = c_s)$?  If $\delta_s$ were defined in this way, then the
  respective requirement would be satisfied; however, because this
  $\delta_s$ was not analyzed in the earlier steps of the algorithm,
  it is possible that, in adding this $\delta_s$, as opposed to the
  negation of the non-trivial part, an opportunity was missed to
  completely satisfy a higher priority requirement.  Thus, the need to
  redefine $\sigma^{*}$ and restart the algorithm.  Note that if no
  higher priority requirement meets one of the conditions of Step (6)
  in the next iteration, then in this next iteration the algorithm we
  get past Step (3) since the new $\sigma^*$ is a stronger consistent
  clause than the old $\sigma^*$; we will exit at Step (9); $\delta_s$
  will be defined as suggested, and the respective requirement will be
  completely satisfied.)

\end{enumerate}

Notice that for each successive loop through the algorithm, the index
$i^{*}$ is strictly less than it was before, so the algorithm must
terminate, and $\delta_s$ is well-defined.

\subsubsection{Definition/Construction of the stage s approximations
  to the potential isomorphisms}

If $R_{\Phi}$ is the highest priority requirement (with index less
than or equal to $e$) that was not completely completely satisfied at
stage $s-1$ and is completely satisfied during this stage $s$, then
initialize all functions $h_{s-1}$ associated with all lower priority
requirements.  If there is no such requirement, then simply initialize
all functions $h_{s-1}$ associated with requirements $R_{\Phi}$ with
index greater than or equal to $e$.

As the final part of the construction at positive even stages, we
define $h_{\Phi, s}$ on the requirements $R_{\Phi}$ that still require
attention at stage $s$ (even after our work at stage $s$ so far).  We
will focus on one of these and refer to it as $h_s$ from now on.  (But
again, we would do this work for \textit{every} $R_{\Phi}$ that still
requires attention at stage $s$, which, by definition, is a finite
number of requirements.)

Let $\vec{a} = dom(\Phi_s)$ (thought of as an ordered tuple, not just
a set).  By the assumption that $R_{\Phi}$ requires attention at this
stage, if $h_{s-1}$ had been initialized above during stage $s$ then
$\vec{a}$ contains at least $a_0$; if we know at this stage that
$\mathcal{A}$ is finite model of size $k$ then $\vec{a}$ is the entire
universe of $\mathcal{A}$; or $\vec{a}$ contains an initial segment of
the universe of $\mathcal{A}$ that includes all tuples in
$dom(h_{s-1})$ and at least one more element.

Recall that we are automatically conceiving of $\Phi_s(\vec{a})$ as
being constants from $C$ and among $\vec{c}_s$.  Consider the sentence
$\theta_{s}(\vec{c}_s)$.  We look at $\theta_{s}(\vec{c}_s)$ as
$\theta_s(\vec{c}_s - \Phi_s(\vec{a}), \Phi_s(\vec{a}))$.  Let
$\vec{y}$, $\vec{x}$ be two new, disjoint tuples of variables (not
appearing among the variables of $\theta_s$) of the same length as
$\vec{c}_s - \Phi_s(\vec{a})$, $\Phi_s(\vec{a})$, respectively.
Define
$h_s(\vec{a}) := \phi(\vec{x}) = \exists\vec{y} \theta_s(\vec{y},
\vec{x})$.
(Clearly, $\mathcal{A} \models \phi(\vec{a})$, because $R_{\Phi}$ has
not been completely satisfied).  The majority of the Verification
subsection below is devoted to proving that -- for any requirement
$R_{\Phi}$ to receive attention infinitely often, and after finitely
much initialization due to higher priority requirements has stopped --
the formulas $\phi(\vec{x})$ are complete.)

Finally, for all other functions $h_{\hat{\Phi}}$ associated with
other requirements that have not already been initialized at this
stage $s$, let $h_{\hat{\Phi}, s} := h_{\hat{\Phi}, s - 1}$.

This concludes the construction.

\subsection{Verification}

\begin{lem} $\mathcal{M}$ is decidable and $\mathcal{M} \models T$.

  \begin{proof} The construction is an expansion on the standard
    Henkin construction.  All of the components that guarantee the
    claim of the lemma are included.  First, the construction
    constructs a complete theory $\Gamma$ in the expanded language by
    eventually adding $\sigma_e$ or $\neg \sigma_e$ (with a trivial
    conjunct of the form $(c_s = c_s)$ appended) to $\Gamma$.  It is
    true that, even if $\sigma_{e}$ is the original sentence
    considered at a particular even stage $s$, the above algorithm,
    because of Step (6) condition b), might redefine $\delta_s$ to be
    a sentence that implies neither $\sigma_e$ nor $\neg \sigma_e$.
    Now, without any such delays, the sentence $\sigma_e$ would be
    decided by stage $2e + 2$ at the latest.  However, the decision
    can be delayed only by $R$ requirements with index $< e$.
    Therefore, stage $s = 4e + 4$ provides an upper bound on the stage
    by which $\sigma_e$ or $\neg \sigma_e$ (with a trivial conjunct
    appended) is included in $\Gamma$.

    Second, the algorithm employed at even stages, which is not part
    of the standard Henkin construction, always terminates, and it
    preserves consistency with $T$ throughout.  Third, the odd stages
    simply guarantee the existence of Henkin witnesses.  Fourth, as in
    the standard Henkin construction, elements of the model are
    equivalence classes of constant symbols.

    Finally, the definitions of the parts of functions $h_{\Phi,s}$ is
    an additional component of our construction, but this part of the
    construction does not affect choices in how we build $\mathcal{M}$
    and the complete theory $\Gamma$.
 
  \end{proof}

\end{lem}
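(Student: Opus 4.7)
The plan is to verify the three standard conclusions of a Henkin construction — consistency of the growing partial theory with $T$, completeness of $\Gamma$, and the Henkin witness property — and to show that all of this happens computably, so that $\mathcal{M}$ is decidable. Once these are in place, defining $\mathcal{M}$ via the equivalence classes of $C$ modulo $(c_i = c_j) \in \Gamma$ gives a decidable model of $\Gamma$, hence of $T$.

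The inductive invariant I would maintain is that $T \cup \{\theta_s(\vec{c}_s)\}$ is consistent at every stage $s$. Odd-stage Henkin witnessing preserves this by the usual argument using a fresh constant $c_s$. For the even stages, I would trace every exit point of the algorithm: at step (3), consistency of the added $\gamma$ is explicit; at steps (5) and (7), the current $\sigma^*$ was confirmed consistent with $T \cup \{\theta_{s-1}\}$ during the preceding iteration of steps (2)--(3); at step (9), condition (a) of step (6) identifies the unique consistent choice; at step (10), the newly strengthened $\sigma^*$ is $T$-consistent with $\theta_{s-1}$ precisely because this is the content of condition (b) of step (6), which supplies the witnessing tuple $\vec{x}$.

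For completeness and computability of $\Gamma$, I would use the internal termination of the algorithm — each firing of step (10) strictly decreases $i^*$ — to conclude that $\delta_s$ is produced effectively at every even stage. To show that every nominal $\sigma_e$ is eventually decided, I would argue that $\sigma_e$ is displaced from being the final $\delta_s$ only when some $R_\Phi$ with index less than $e$ triggers step (10), and each such $R_\Phi$ can force at most one such displacement over the entire construction, because step (10) leaves $R_\Phi$ completely satisfied by the chosen $\gamma$. Since there are fewer than $e$ such requirements, $\sigma_e$ receives its final decision within a linear-in-$e$ stage bound (the author indicates $4e+4$), giving simultaneously completeness of $\Gamma$ and a uniform computable bound, hence decidability of $\Gamma$ and therefore of $\mathcal{M}$.

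The delicate point I anticipate is the consistency bookkeeping at step (10) under looping: the $\sigma^*$ entering a later iteration of the loop is no longer the original $\sigma_e$ but a previously strengthened sentence, so I need condition (b) to certify consistency of the newest strengthening relative to $T \cup \{\theta_{s-1}\}$ specifically. Because $\theta_{s-1}$ is fixed throughout the algorithm (no $\delta_s$ is committed until an exit point) and condition (b) is re-evaluated at each loop iteration with the current $\sigma^*$ and $i^*$, the induction closes cleanly. The parallel definition of the $h_{\Phi,s}$ plays no role in the selection of $\delta_s$, so it has no bearing on this lemma.
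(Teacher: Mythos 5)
Your overall route is the same as the paper's: maintain consistency of $T \cup \{\theta_s\}$ stage by stage (odd stages by the fresh-constant Henkin argument, even stages by checking every exit point of the algorithm), use the strictly decreasing $i^*$ for termination, get completeness of $\Gamma$ with an effective stage bound, and observe that the $h_{\Phi,s}$ bookkeeping never influences $\delta_s$. But the justification of your displacement count is a step that fails as stated. Step (10) by itself adds nothing to $\Gamma$ and does not leave $R_\Phi$ completely satisfied: it only strengthens $\sigma^*$ and restarts the loop, and in a later iteration of the same stage a \emph{higher-priority} requirement may force an exit at step (9) with $\gamma = \neg\sigma^*$, in which case $R_\Phi$ ends the stage unsatisfied and is free to trigger step (10) again at later stages. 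So ``each $R_\Phi$ can force at most one displacement over the entire construction'' is not what the construction gives you. The correct charging, which is what the paper's $4e+4$ bound implicitly rests on, is that \emph{any} stage at which the nominal $\sigma_e$ is displaced must end at an exit whose chosen sentence newly and permanently satisfies \emph{some} requirement of index $\leq e$: if the stage exits via step (9), the requirement whose condition (a) selected $\gamma$ becomes completely satisfied; if it exits via step (3) (or (5)/(7)) after a step-(10) firing, the strengthened $\sigma^*$ is the only choice consistent with $T \cup \{\theta_{s-1}\}$, and adding it makes $\mathcal{A} \not\models \exists\vec{y}\,\theta_s(\vec{y},\vec{a})$ for the requirement that last fired, satisfying it. Since complete satisfaction is permanent, at most $e+1$ such stages can occur, and the linear bound (hence an effective decision procedure for $\Gamma$, hence decidability of $\mathcal{M}$) follows. (Alternatively, decidability already follows from completeness together with the computable enumeration of $\Gamma$, so the explicit bound is only a convenience.)

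A second, smaller inaccuracy: at a step-(9) exit, condition (a) does not ``identify the unique consistent choice.'' Both $\sigma^*$ and $\neg\sigma^*$ are consistent with $T \cup \{\theta_{s-1}\}$ at that point --- that was settled at step (3) of the same iteration --- and condition (a) merely selects the one that diagonalizes against $\Phi$ using $\mathcal{A}$. Your consistency invariant still closes at that exit, but only because it is inherited from the step-(3) check, not for the reason you give.
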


\begin{lem} If every requirement $R_{\Phi}$ requires attention only
  finitely often, then $\mathcal{A}$ is not embeddable by a computable
  embedding into $\mathcal{M}$.

  \begin{proof} Assume every requirement requires attention only
    finitely often.  By definition, there are only two reasons that a
    requirement $R_{\Phi}$ stops requiring attention by stage $s$.
    First, because $R_{\Phi}$ becomes completely satisfied by $s$, so
    one of the following is true:

    \begin{itemize}

    \item the corresponding
      $\Phi': \mathcal{A} \rightarrow \mathcal{M}$ is not 1-1; OR

    \item for some tuple $\vec{a} \in \mathcal{A}$ and some forumla
      $\varphi(\vec{x})$, $\mathcal{A} \models \varphi(\vec{a})$ and
      $\mathcal{M} \models \neg \varphi(\Phi'(\vec{a}))$.

    \end{itemize}

    \noindent (See the above section on conventions and facts
    regarding the connection between $\Phi$ and $\Phi'$.)

    Second, because $dom(\Phi)$ does not include the universe of
    $\mathcal{A}$, and hence $\Phi'$ does not include the universe of
    $\mathcal{A}$.

    Now, as the section on conventions explained, every computable
    function $f: \mathcal{A} \rightarrow \mathcal{M}$ is equal to
    $\Phi'$ for some $\Phi: \mathcal{A} \rightarrow {C}$.
    Therefore, if \textit{every requirement} $R_{\Phi}$ stops
    requiring attention by some stage $s$, then every computable
    function from $\mathcal{A}$ to $\mathcal{M}$ fails to be an
    elementary embedding.

  \end{proof}

\end{lem}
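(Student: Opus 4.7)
The plan is to exploit the correspondence between computable partial maps $f : \mathcal{A} \to \mathcal{M}$ and Turing functionals $\Phi$ with range in $C$, established in the Conventions subsection: every such $f$ arises as the $\Phi'$ of some $\Phi$. Thus it suffices to show that if every $R_\Phi$ stops requiring attention by some stage, then no $\Phi'$ is a computable elementary embedding of $\mathcal{A}$ into $\mathcal{M}$.

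I would proceed by case analysis on \emph{why} a given $R_\Phi$ stops requiring attention. The first case is that $R_\Phi$ becomes completely satisfied by some stage $s$. Unpacking the definition, either $\Phi_s$ fails to be $1$-$1$, so that $\Phi'$ cannot even be injective, or with $\vec{a} = dom(\Phi_s)$ we have $\mathcal{A} \not\models \exists \vec{y}\, \theta_s(\vec{y}, \vec{a})$. Since $\delta_0, \ldots, \delta_s \in \Gamma$, the model $\mathcal{M}$ satisfies $\theta_s(\vec{c}_s)$ and hence $\mathcal{M} \models \exists \vec{y}\, \theta_s(\vec{y}, \Phi_s(\vec{a}))$. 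Taking $\varphi(\vec{x}) := \exists \vec{y}\, \theta_s(\vec{y}, \vec{x})$ yields a formula with $\mathcal{A} \models \neg \varphi(\vec{a})$ but $\mathcal{M} \models \varphi(\Phi'(\vec{a}))$, so $\Phi'$ cannot be elementary.

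The second case is that $R_\Phi$ stops requiring attention without ever becoming completely satisfied. The three sub-clauses in the definition of ``requires attention'' cover: $h_{\Phi,s-1}$ being empty or freshly initialized with $\Phi_s(a_0)\!\downarrow$; $\Phi_s$ having exhausted a finite universe of $\mathcal{A}$; or $h_{\Phi,s-1}$ being stable and $dom(\Phi_s)$ extending it by one more initial-segment element. Under the standing hypothesis that \emph{every} $R_\Phi$, in particular each higher-priority one, acts only finitely often, initializations of $h_\Phi$ also cease eventually, and the first clause would fire once $\Phi_s(a_0)\!\downarrow$, so $h_\Phi$ is eventually nonempty and stable. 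Past that stage, the third clause is forced to fire each time $dom(\Phi_s)$ gains another initial-segment element; the only way it can fail indefinitely is for $dom(\Phi)$ to fail to exhaust $|\mathcal{A}|$ (or, in the finite case, for the second clause to fail because $\Phi$ misses an element). Either way $\Phi'$ is not total on $\mathcal{A}$, hence not an embedding.

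The main obstacle is the bookkeeping in the second case: one must verify that the interlocking conditions on $h_{\Phi,s-1}$, on initialization, and on initial segments really do force attention whenever $\Phi$ behaves ``well.'' This is where the global hypothesis is used essentially; it is not enough that $R_\Phi$ itself quiets down, one needs higher-priority requirements to stop triggering initializations as well, so that the stability of $h_\Phi$ reduces the question of whether attention is required to a pure convergence question about $\Phi$.
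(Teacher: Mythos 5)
Your proposal is correct and takes essentially the same route as the paper: both proofs split according to why $R_{\Phi}$ stops requiring attention (complete satisfaction, which gives either non-injectivity or a formula of the form $\exists\vec{y}\,\theta_s(\vec{y},\vec{x})$ on which $\mathcal{A}$ and $\mathcal{M}$ disagree, versus $dom(\Phi)$ failing to exhaust $|\mathcal{A}|$), and then use the $\Phi\leftrightarrow\Phi'$ translation from the conventions to conclude that no computable map is an elementary embedding; you actually supply more detail than the paper does in both cases. One small bookkeeping note: initializations of $h_{\Phi}$ cease not only because higher-priority requirements are first completely satisfied at most once, but also because the routine initialization at even stages only touches indices $\geq e$ and the sentence index $e$ grows without bound; this does not affect your conclusion.
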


\begin{remark}\label{remark}
  Therefore, for the rest of this verification, we assume that there
  is a requirement $R_{\Phi}$ and stages $s^{*} \leq s$ with the
  following three properties:

  \begin{itemize}

  \item $R_{\Phi}$ requires attention infinitely often.

  \item $s^{*}$ is the \textit{least stage} $t$ with the following
    three properties:

    \begin{itemize}

    \item $t >$ the index of $\Phi$

    \item for each stage $u \geq t$, it is NOT the case that a
      requirement $R_{\hat{\Phi}}$ of priority higher than that of
      $R_{\Phi}$ first becomes completely satisfied at $u$;

    \item for each $e$ less than or equal to the index of $\Phi$, the
      algorithm in sub-subsection 2.3.1 has explicitly added
      $\sigma_e$ or $\neg \sigma_e$ to $\Gamma$ before stage $t$.

    \end{itemize}

  \item $s$ is the first stage $\geq s^{*}$ so that $R_{\Phi}$
    requires attention at $s$.

  \end{itemize}

\end{remark}

This requirement and these stages will be of particular importance as
we state and prove the uniform version of this theorem below.

With this requirement $R_{\Phi}$ and these stages $s^{*}$ and $s$
\textit{fixed}, we must prove that
$h_{\Phi} = \bigcup_{t \geq s}h_{\Phi, t}$ has the properties stated
near the beginning of Subsection 2.2.  We will refer to this function
simply as $h$ from now on, and its stage $t$ approximation as $h_t$.
The following long lemma will essentially complete this proof.  Recall
the notation from subsection 2.1 that $\theta_w$ is the conjunction of
all sentences of $\Gamma$ enumerated by the end of stage $w$.

\begin{lem} For each stage $t \geq s$ for which $R_{\Phi}$ requires
  attention, we recall or consider the following notational
  conventions:

  \begin{enumerate}

  \item $\vec{a}_t = dom(\Phi_t)$;

  \item $\vec{x}_t$ is a tuple of new variables (i.e., not appearing
    in $\theta_t$) of the same length as $\vec{a}_t$ (which is the
    same length as $\Phi(\vec{a}_t)$ since $\Phi$ is 1-1);

  \item for each $u \geq t$, $\vec{y}_u$ is a tuple of new variables
    (i.e., not appearing in $\theta_u$) of the same length as
    $\vec{c}_u - \Phi_t(\vec{a}_t)$;

  \item
    $h_t(\vec{a}_t) = \phi(\vec{x}_t) = \exists\vec{y}_t
    \theta_t(\vec{y}_t, \vec{x}_t)$;

  \item for each $u \geq t$, we consider
    $\theta_u(\vec{c}_u) = \theta_u(\vec{c}_u - \Phi_t(\vec{a}),
    \Phi_t(\vec{a}))$,
    and we assume (making trivial changes, if necessary) that
    $\theta_u$ does not use any of the variables in the tuple
    $\vec{x}_t$.

  \end{enumerate}
    
  Then for all $u \geq t$,
  $\mathcal{A} \models \exists \vec{y}_u \theta_u(\vec{y}_u,
  \vec{a}_t)$
  and
  $T \vdash \phi(\vec{x}_t) \rightarrow \exists \vec{y}_u
  \theta_u(\vec{y}_u, \vec{x}_t)$.
  
  \noindent (Note: in (3), (4), and the conclusion of the lemma, the
  different subscripts $u$ and $t$ are intentional.)

  \begin{proof} Let $t \geq s$ be a stage where $R_{\Phi}$ requires
    attention.  

    For all $u \geq t$, the first part of the statement must be true.
    Assume otherwise.  Then, since $u \geq t$,
    $dom(\Phi_t) \subseteq dom(\Phi_{u})$; and so, it would certainly
    be the case that if $\vec{a} = dom(\Phi_{u})$, and we look at
    $\theta_{u}(\vec{c}_{u})$ as
    $\theta_u(\vec{c}_{u} - \Phi_{u}(\vec{a}), \Phi_{u}(\vec{a}))$,
    and $\vec{y}$ is a tuple of new variables (not appearing among the
    variables in $\theta_{u}(\vec{c}_{u+1})$) of the same length as
    $\vec{c}_{u} - \Phi_{u}(\vec{a})$, then
    $\mathcal{A} \not \models \exists \vec{y} \theta_{u}(\vec{y},
    \vec{a})$.
    Therefore, $R_{\Phi}$ would be completely satisfied, and would no
    longer receive attention.  Therefore, for all $u \geq t$,
    $\mathcal{A} \models \exists \vec{y}_u \theta_u(\vec{y}_u,
    \vec{a}_t)$.
  
    We prove the second part of the statement by induction on
    $u \geq t$.  For $u = t$, of course, $\phi(\vec{x}_t)$ and
    $\exists \vec{y}_u \theta_u(\vec{y}_u, \vec{x}_t)$ are exactly the
    same formula, so the statement is obviously true.  Assume that for
    all $u'$ with $t \leq u' \leq u$,
    $T \vdash \phi(\vec{x}_t) \rightarrow \exists \vec{y}_{u'}
    \theta_{u'}(\vec{y}_{u'}, \vec{x}_t)$.
    We must show that
    $T \vdash \phi(\vec{x}_t) \rightarrow \exists \vec{y}_{u+1}
    \theta_{u+1}(\vec{y}_{u+1}, \vec{x}_t)$.

    Recall that the statement $\theta_{u+1}(\vec{c}_{u+1})$ is just
    the statement $\theta_{u}(\vec{c}_{u}) \wedge \delta_{u+1}$, where
    $\delta_{u+1}$ is the sentence added at stage $u+1$ of the
    construction given in subsection 2.3.  The form of this sentence
    $\delta_{u+1}$ depends on the number $u+1$.  We consider the
    cases.
    
    \underline{Case 1a)} $u+1 = 2k + 1$ for some $k \in \omega$, and
    $\delta_k = \exists x \gamma(x) \wedge \tau$, where $\tau$ is a
    conjunction of sentences of the form $(c_i = c_i)$.  Then
    $\delta_{u+1} = \gamma(c_{u+1}) \wedge (c_{u+1} = c_{u+1})$.
    Since $u > k$, the sentence $\delta_k$ is already included as one
    of the conjuncts of $\theta_u(\vec{c}_u)$.  Therefore,
    $\exists \vec{y}_{u} \theta_{u}(\vec{y}_{u}, \vec{x}_t)$ has the
    form
    $\exists \vec{y}_{u}[\ldots \wedge \exists x \gamma(x) \wedge
    \ldots]$,
    where whatever elements of $\vec{c}_k (\subseteq \vec{c}_{u})$
    appearing in $\gamma(x)$ have been replaced by the corresponding
    elements of $\vec{y}_{u}$ or $\vec{x}_{t}$, according to our
    normal substitution conventions.  In particular, we assume that
    the variable $x$ in $\gamma(x)$ is not one of the variables in the
    tuple $\vec{x}_t$.

    Similarly, since
    $\theta_{u+1}(\vec{c}_{u+1})= \theta_{u}(\vec{c}_{u}) \wedge
    \delta_{u+1}$,
    and $\delta_{u+1} = \gamma(c_{u+1}) \wedge (c_{u+1} = c_{u+1})$, \\
    $\exists \vec{y}_{u+1} \theta_{u+1}(\vec{y}_{u+1}, \vec{x}_t)$ has
    the form
    $\exists \vec{y}_{u}\exists y_{u+1}[\ldots \wedge \exists x
    \gamma(x) \wedge \ldots \wedge \gamma(y_{u+1}) \wedge (y_{u+1} =
    y_{u+1})]$,
    where all other substitutions of the variables of $\vec{y}_{u}$
    and $\vec{x}_{t}$ in the two appearances of $\gamma$ are exactly
    the same.  Furthermore, by our conventions, neither $\gamma(x)$
    nor any of the other conjuncts in $\theta_{u}$ makes any mention
    of $c_{u+1}$.  Therefore, the formula
    $\exists \vec{y}_{u+1} \theta_{u+1}(\vec{y}_{u+1}, \vec{x}_t)$ and
    the formula
    $\exists \vec{y}_{u} \theta_{u}(\vec{y}_{u}, \vec{x}_t)$ are
    logically equivalent.  Since
    $T \vdash \phi(\vec{x}_t) \rightarrow \exists \vec{y}_{u}
    \theta_{u}(\vec{y}_{u}, \vec{x}_t)$,
    $T \vdash \phi(\vec{x}_t) \rightarrow \exists \vec{y}_{u+1}
    \theta_{u+1}(\vec{y}_{u+1}, \vec{x}_t)$.

    \underline{Case 1b)}: $u+1 = 2k +1$ for some $k \in \omega$, but
    $\delta_k$ does not have the above form of an existential sentence
    (with a trivial $\tau$ attached).  In this case $\delta_{u+1}$ is
    just the trivial sentence $(c_{u+1} = c_{u+1})$, so again,
    trivially, the formula
    $\exists \vec{y}_{u+1} \theta_{u+1}(\vec{y}_{u+1}, \vec{x}_t)$ and
    the formula
    $\exists \vec{y}_{u} \theta_{u}(\vec{y}_{u}, \vec{x}_t)$ are
    logically equivalent.  Therefore,
    $T \vdash \phi(\vec{x}_t) \rightarrow \exists \vec{y}_{u+1}
    \theta_{u+1}(\vec{y}_{u+1}, \vec{x}_t)$.
  
    \underline{Case 2}: $u+1 = 2k + 2$.  Therefore, $\delta_{u+1}$ is
    determined by the algorithm in sub-subsection 2.3.1.  That is,
    $\delta_{u+1} = \pm\sigma^{*} \wedge (c_{u+1} = c_{u+1})$ for
    $\sigma^{*}$ relative to the last iteration of the algorithm at
    stage $u+1$.  For the rest of this proof, we refer to the
    non-trivial part of $\delta_{u+1}$ as $\gamma$; i.e.,
    $\gamma = \sigma^{*}$ or $\gamma = \neg \sigma^{*}$.  Note that
    $c_{u+1}$ does not appear in $\gamma$. 

    If the algorithm at stage $u+1$ at this last iteration exits at
    Step 3, then it is the case that
    $T \vdash \forall\vec{z}[\theta_u(\vec{z}) \rightarrow
    \gamma(\vec{z})]$.
    Therefore, since by induction hypothesis,
    $T \vdash \phi(\vec{x}_t) \rightarrow \exists \vec{y}_{u}
    \theta_{u}(\vec{y}_{u}, \vec{x}_t)$,
    and
    $\theta_{u+1}(\vec{y}_{u+1}, \vec{x}_t) = \theta_u(\vec{y}_u,
    \vec{x}_t) \wedge \gamma(\vec{y}_u, \vec{x}_t) \wedge (y_{u+1} =
    y_{u+1})$,
    $T \vdash \phi(\vec{x}_t) \rightarrow
    \exists\vec{y}_{u+1}\theta_{u+1}(\vec{y}_{u+1}, \vec{x}_t)$.

    It cannot be the case that the algorithm exits at Step 9 for the
    sake of $\Phi$, for then $\Phi$ would be completely satisfied and
    would stop receiving attention.

    Finally, for the rest of this case, we assume, in order to obtain
    a contradiction, that
    $T \not \vdash [\phi(\vec{x}_t) \rightarrow
    \exists\vec{y}_{u+1}\theta_{u+1}(\vec{y}_{u+1}, \vec{x}_t)]$.
    That is, we assume that
    $$T \vdash \exists \vec{x}_t[\phi(\vec{x}_t) \wedge
    \forall\vec{y}_{u+1}(\neg \theta_{u+1}(\vec{y}_{u+1},
    \vec{x}_{t}))].$$
    Again, since
    $\theta_{u+1}(\vec{y}_{u+1}, \vec{x}_t) = \theta_u(\vec{y}_u,
    \vec{x}_t) \wedge \gamma(\vec{y}_u, \vec{x}_t) \wedge (y_{u+1} =
    y_{u+1})$,
    $\forall\vec{y}_{u+1}(\neg \theta_{u+1}(\vec{y}_{u+1},
    \vec{x}_{t}))$
    is logically equivalent to
    $\forall\vec{y}_{u}(\neg \theta_u(\vec{y}_u, \vec{x}_t) \vee \neg
    \gamma(\vec{y}_u, \vec{x}_t))$,
    which is logically equivalent to
    $\forall\vec{y}_{u}( \theta_u(\vec{y}_u, \vec{x}_t) \rightarrow
    \neg \gamma(\vec{y}_u, \vec{x}_t))$.
    Therefore,
    $T \vdash \exists\vec{x}_t[\phi(\vec{x}_t) \wedge
    \forall\vec{y}_{u}( \theta_u(\vec{y}_u, \vec{x}_t) \rightarrow
    \neg \gamma(\vec{y}_u, \vec{x}_t))]$.
    Moreover, by induction hypothesis,
    $T \vdash \phi(\vec{x}_t) \rightarrow \exists \vec{y}_{u}
    \theta_{u}(\vec{y}_{u}, \vec{x}_t)$.
    And so,
    $$T \vdash \exists\vec{x}_t[\exists \vec{y}_{u}
    \theta_{u}(\vec{y}_{u}, \vec{x}_t) \wedge \forall\vec{y}_{u}(
    \theta_u(\vec{y}_u, \vec{x}_t) \rightarrow \neg \gamma(\vec{y}_u,
    \vec{x}_t))].$$

    Now, except for the use of $\neg \gamma$ instead of $\gamma$, this
    last statement is almost exactly what appears at stage $u+1$ in
    Condition b) under the second bullet point of Step (6) of the
    algorithm, which is
    $T \vdash \exists \vec{x} [\exists \vec{y}(\theta_{u}(\vec{y},
    \vec{x})) \wedge \forall\vec{y}(\theta_{u}(\vec{y}, \vec{x})
    \rightarrow \gamma(\vec{y}, \vec{x}))]$.
    However, we have to be careful, because the length of the tuples
    is not correct; i.e., at stage $u+1$, the length of $\vec{x}$
    mentioned in the algorithm is the same as the length of the range
    of $\Phi_{u+1}$, and the length of $\vec{y}$ mentioned in the
    algorithm is the same as the length of $(\vec{c}_u -$ the range of
    $\Phi_{u+1})$.  Notice, because $u \geq t$, that the length of
    $\vec{x}_t$ is less than or equal to that of $\vec{x}$ in the
    algorithm, so the length of $\vec{y}_u$ is greater than or equal
    to that of $\vec{y}$ in the algorithm.  Nevertheless, the
    following paragraph establishes that, indeed,
    $T \vdash \exists \vec{x} [\exists \vec{y}(\theta_{u}(\vec{y},
    \vec{x})) \wedge \forall\vec{y}(\theta_{u}(\vec{y}, \vec{x})
    \rightarrow \neg \gamma(\vec{y}, \vec{x}))]$.
      
    Rather than working purely syntactically, it is easier to consider
    an arbitrary model $\mathcal{D}$ of the theory $T$.  Since
    $T \vdash \exists\vec{x}_t[\exists \vec{y}_{u}
    \theta_{u}(\vec{y}_{u}, \vec{x}_t) \wedge \forall\vec{y}_{u}(
    \theta_u(\vec{y}_u, \vec{x}_t) \rightarrow \neg \gamma(\vec{y}_u,
    \vec{x}_t))]$,
    there is a $\vec{d}_t \in \mathcal{D}$ of the same length as
    $\vec{x}_t$ and a $\vec{d}'$ of the same length as $\vec{y}_u$ so
    that $\mathcal{D} \models \theta_u(\vec{d}', \vec{d}_t)$ and
    $\mathcal{D} \models \forall\vec{y}_{u}(\theta_u(\vec{y}_u,
    \vec{d}_t) \rightarrow \neg \gamma(\vec{y}_u, \vec{d}_t))$.
    (It is possible that there is repetition of elements within or
    between these two tuples of $\mathcal{D}$; for instance, the
    formula $\theta$ may not say that all of the elements in
    $\vec{x}_{t}$ are unequal.)  Next, simply ``regroup'' the elements
    of $\vec{d}_t$ and $\vec{d}'$ to get new tuples $\vec{b}$ and
    $\vec{b}'$ in $\mathcal{D}$ of the length of $\vec{x}$ and
    $\vec{y}$, respectively, in the algorithm.  (We are not talking
    about any deep re-arrangement here; we're just looking at what
    elements of $\mathcal{D}$ are substituted for what variables in
    $\theta_u$ and $\gamma$.  Again, repetition of elements within
    and/or between the tuples $\vec{b}$ and $\vec{b}'$ may occur.)
    Notice, since $\vec{x}$ is at least as long as $\vec{x}_t$, that
    $\vec{b}$ contains all of $\vec{d}_t$, and possibly more.
    Clearly, since
    $\mathcal{D} \models \theta_u(\vec{d}', \vec{d}_t)$,
    $\mathcal{D} \models \exists \vec{y} \theta_u(\vec{y}, \vec{b})$.
    Now assume that there is $\vec{b}'' \in \mathcal{D}$ of the same
    length as $\vec{y}$ such that
    $\mathcal{D} \models (\theta_u(\vec{b}'', \vec{b}) \wedge
    \gamma(\vec{b}'', \vec{b}))$.
    But since $\vec{b}$ contains all of $\vec{d}_t$, if we simply make
    the ``reverse'' regrouping of $\vec{b}'', \vec{b}$ to get
    $\vec{d}^{*}, \vec{d}_t$, then we'd have
    $\mathcal{D} \models (\theta(\vec{d}^{*}, \vec{d}_{t}) \wedge
    \gamma(\vec{d}^{*}, \vec{d}_t))$,
    which contradicts the fact that
    $\mathcal{D} \models \forall\vec{y}_{u}[\theta_u(\vec{y}_u,
    \vec{d}_t) \rightarrow \neg \gamma(\vec{y}_u, \vec{d}_t)]$.
    Hence, the assumption of the existence of $\vec{b}''$ is false.
    That is,
    $\mathcal{D} \models \exists \vec{y} \theta_u(\vec{y}, \vec{b})$
    and
    $\mathcal{D} \models \forall\vec{y}[\theta_u(\vec{y}, \vec{b})
    \rightarrow \neg \gamma(\vec{y}, \vec{b})]$.
    And so,
    $\mathcal{D} \models \exists \vec{x} [\exists
    \vec{y}(\theta_{u}(\vec{y}, \vec{x})) \wedge
    \forall\vec{y}(\theta_{u}(\vec{y}, \vec{x}) \rightarrow \neg
    \gamma(\vec{y}, \vec{x}))]$.
    Since $\mathcal{D}$ was an arbitrary model of $T$, we can conclude
    that
    $T \vdash \exists \vec{x} [\exists \vec{y}(\theta_{u}(\vec{y},
    \vec{x})) \wedge \forall\vec{y}(\theta_{u}(\vec{y}, \vec{x})
    \rightarrow \neg \gamma(\vec{y}, \vec{x}))]$.
     
    Now, then, we must ask why $\delta_{u+1}$ was defined to be
    $\gamma \wedge (c_{u+1} = c_{u+1})$.  It cannot be that the
    algorithm stopped and exited at Step (3), for then, as noted
    above, the statement we're trying to prove would be true.
    Moreover, by the assumptions about stage $s$, the index of $\Phi$
    is small enough that $\Phi$ will be considered in the
    \textit{first iteration} of the algorithm at Step (6), since
    $u+1 > s$.  Therefore, $\Phi$ would be considered at Step (6) of
    \textit{all iterations} of the algorithm at stage $u+1$ unless the
    algorithm re-defines $i^{*}$ and exits the algorithm in order to
    completely satisfy a higher priority requirement.  But by the
    assumption about stage $s$, all higher priority requirements that
    will ever be completely satisfied already have been completely
    satisfied.  Therefore, no higher priority requirement at stage
    $u+1$ (or any later stage) can be not completely satisfied and
    meet one of the conditions in Step (6).  And again, as noted
    above, it cannot be that the algorithm exits at Step (9) for the
    sake of $\Phi$.  Consequently,
    $\mathcal{A} \models \exists \vec{y}[\theta_u(\vec{y},
    dom(\Phi_{u+1})) \wedge \gamma(\vec{y}, dom(\Phi_{u+1}))]$
    and
    $\mathcal{A} \models \exists \vec{y}[\theta_u(\vec{y},
    dom(\Phi_{u+1})) \wedge \neg\gamma(\vec{y}, dom(\Phi_{u+1}))]$.
    Moreover, in the above paragraph, we saw that
    $T \vdash \exists \vec{x} [\exists \vec{y}(\theta_{u}(\vec{y},
    \vec{x})) \wedge \forall\vec{y}(\theta_{u}(\vec{y}, \vec{x})
    \rightarrow \neg \gamma(\vec{y}, \vec{x}))]$.
    Therefore, at the iteration of the algorithm with this particular
    $\sigma^{*}$, $\Phi$ \textit{does} satisfy condition (b) under the
    second bullet point of Step (6).  And since no higher priority
    requirements become completely satisfied at stage $u+1$, this
    means that $\delta_{u+1}$ should NOT have been defined to be
    $\gamma \wedge (c_{u+1} = c_{u+1})$.  Instead, $\delta_{u+1}$
    should have been defined to be
    $\neg \gamma \wedge \forall \vec{y}(\theta_u(\vec{y},
    ran(\Phi_{u+1})) \rightarrow \neg \gamma(\vec{y},
    ran(\Phi_{u+1}))) \wedge (c_{u+1} = c_{u+1})$.
    But then $R_{\Phi}$ would become completely satisfied at stage
    $u+1$ and hence would never again require attention. This is a
    contradiction.  Therefore, the additional assumption must be
    false.  That is,
    $T \vdash [\phi(\vec{x}_t) \rightarrow
    \exists\vec{y}_{u+1}\theta_{u+1}(\vec{y}_{u+1}, \vec{x}_t)]$.
  \end{proof}

\end{lem}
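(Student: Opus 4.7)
The plan is to prove the first assertion quickly from the definition of complete satisfaction, and then to attack the implication $T \vdash \phi(\vec{x}_t) \rightarrow \exists \vec{y}_u \theta_u(\vec{y}_u, \vec{x}_t)$ by induction on $u \geq t$. For the first assertion I would argue by contrapositive: if $\mathcal{A} \not\models \exists \vec{y}_u \theta_u(\vec{y}_u, \vec{a}_t)$, then since $\vec{a}_t \subseteq \vec{a}_u$ and $\vec{c}_u - \Phi_u(\vec{a}_u) \subseteq \vec{c}_u - \Phi_t(\vec{a}_t)$, existentially quantifying over the extra $\Phi_u(\vec{a}_u) - \Phi_t(\vec{a}_t)$ coordinates would show $\mathcal{A} \not\models \exists \vec{y}\, \theta_u(\vec{y}, \vec{a}_u)$. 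This is exactly clause (2) of the definition of "completely satisfied at stage $u$," contradicting the global assumption that $R_\Phi$ requires attention infinitely often.

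For the implication, the base case $u = t$ is immediate because the right-hand side is literally $\phi(\vec{x}_t)$. The inductive step splits on the form of $\delta_{u+1}$. At odd stages, $\delta_{u+1}$ is either a trivial equality or a Henkin witness $\gamma(c_{u+1})$ for an existential $\exists x\, \gamma(x)$ that is already a conjunct of $\theta_u$; in either case routine quantifier manipulation shows $\exists \vec{y}_{u+1}\theta_{u+1}(\vec{y}_{u+1},\vec{x}_t)$ is logically equivalent to $\exists \vec{y}_u \theta_u(\vec{y}_u,\vec{x}_t)$, so the inductive hypothesis transfers. At even stages, where $\delta_{u+1} = \gamma \wedge (c_{u+1}=c_{u+1})$ with $\gamma$ chosen by the algorithm, an exit at Step 3 gives $T \vdash \forall \vec{z}(\theta_u(\vec{z}) \rightarrow \gamma(\vec{z}))$ unconditionally and we are done, while an exit at Step 9 on account of $\Phi$ would completely satisfy $R_\Phi$, contradicting the hypothesis.

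The crux is the exit at Step 5 or 7. I would argue by contradiction: assuming $T \not\vdash \phi(\vec{x}_t) \rightarrow \exists \vec{y}_{u+1}\theta_{u+1}(\vec{y}_{u+1},\vec{x}_t)$, I would absorb the trivial Henkin conjunct, apply the inductive hypothesis, and rearrange to obtain
\[
T \vdash \exists \vec{x}_t \bigl[\exists \vec{y}_u\, \theta_u(\vec{y}_u,\vec{x}_t) \wedge \forall \vec{y}_u(\theta_u(\vec{y}_u,\vec{x}_t) \rightarrow \neg \gamma(\vec{y}_u,\vec{x}_t))\bigr].
\]
This is almost Condition (b) of Step 6 for $\Phi$ with $\neg \gamma$ in place of $\gamma$, except that the algorithm's tuple $\vec{x}$ at stage $u+1$ has length $|\Phi_{u+1}(\vec{a}_{u+1})| \geq |\vec{x}_t|$. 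I would handle this length mismatch semantically: pick an arbitrary model $\mathcal{D} \models T$, pull out witnesses $\vec{d}_t$ and $\vec{d}'$, and then re-group them into tuples $\vec{b}, \vec{b}'$ of the algorithm's lengths by placing $\vec{d}_t$ into the coordinates corresponding to $\Phi_t(\vec{a}_t)$ and shunting the remaining elements of $\vec{d}'$ into the other coordinates; this is legitimate precisely because $\Phi_t(\vec{a}_t) \subseteq \Phi_{u+1}(\vec{a}_{u+1}) \subseteq \vec{c}_u$. The result gives the algorithm's version of Condition (b) with $\neg \gamma$.

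Finally, I would invoke the choice of $s^{*}$: since no higher-priority requirement becomes completely satisfied after $s^{*}$, the index $i^{*}$ at stage $u+1 > s \geq s^{*}$ never drops below the index of $\Phi$, so $\Phi$ is examined at Step 6 on the iteration that produced this $\gamma$. The derivation just established forces the algorithm at that iteration to pick $\neg\gamma \wedge \forall \vec{y}(\theta_u(\vec{y},\Phi_{u+1}(\vec{a}_{u+1})) \rightarrow \neg\gamma(\vec{y},\Phi_{u+1}(\vec{a}_{u+1})))$ rather than $\gamma$, and thereby to completely satisfy $R_\Phi$ at stage $u+1$ — contradicting that $R_\Phi$ still requires attention after stage $s$. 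The main technical obstacle will be the re-indexing of tuples in the semantic argument, together with the bookkeeping that rules out every exit point of the algorithm other than the one we want; the choice of $s^{*}$ is precisely what prevents the proof from being derailed by interference from higher-priority requirements.
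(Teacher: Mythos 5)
Your proposal is correct and follows essentially the same route as the paper's proof: the first claim via the contrapositive and clause (2) of complete satisfaction, the implication by induction with the same case split on odd/even stages, the same contradiction argument reducing to Condition (b) of Step (6) with $\neg\gamma$, the same semantic regrouping in an arbitrary model $\mathcal{D}\models T$ to fix the tuple-length mismatch, and the same appeal to the choice of $s^{*}$ to rule out higher-priority interference and force complete satisfaction of $R_{\Phi}$ at stage $u+1$. The only points the paper spells out more fully are the bookkeeping that Condition (a) fails for $\Phi$ at the relevant iteration (so that Condition (b) is the one that applies) and the explicit verification that the regrouped tuples satisfy the universal conjunct, both of which your sketch correctly anticipates as the technical work to be done.
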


If we continue all of the notation from the previous lemma, then
almost instantly we obtain the following as a corollary:

\begin{cor} \label{16}  For all $t \geq s$, and for all
  $\rho(\vec{x}_t)$ in the original language,
    
  \begin{enumerate}
    
  \item $T \vdash \phi(\vec{x}_t) \rightarrow \rho(\vec{x}_t)$ if
    $\mathcal{A} \models \rho(\vec{a}_t)$ and
    
  \item $T \vdash \phi(\vec{x}_t) \rightarrow \neg\rho(\vec{x}_t)$ if
    $\mathcal{A} \models \neg \rho(\vec{a}_t)$
    
  \end{enumerate}
    
  Therefore, for each $\vec{a}_t$, $h_t(\vec{a}_t) := \phi(\vec{x}_t)$
  is a complete formula.

  \begin{proof} Fix $t \geq s$ and $\rho(\vec{x}_t)$ in the original
    language.  Note that the $\sigma_e$ enumerate all sentences in the
    expanded language, and for each $e$, there is a $u$ so that
    $\pm\sigma_e$ is one of the conjuncts of $\theta_u$.  Therefore,
    there is some $u \geq t$ such that
    $\exists \vec{y}_{u} \theta_u(\vec{y}_u, \vec{x}_t)$ looks like
    $\exists \vec{y}_{u}[\ldots \wedge \rho(\vec{x}_t) \wedge \ldots]$
    or like
    $\exists \vec{y}_{u}[\ldots \wedge \neg \rho(\vec{x}_t) \wedge
    \ldots]$.  Now apply the conclusion of the previous lemma.
  \end{proof}

\end{cor}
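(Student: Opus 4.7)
The plan is to invoke the previous lemma together with the fact that the enumeration $\{\sigma_e\}_{e \in \omega}$ exhausts all sentences in the expanded language $\mathcal{L}(T) \cup C$. Fix $t \geq s$ and a formula $\rho(\vec{x}_t)$ in the original language.

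First I would locate a stage $u \geq t$ large enough that $\theta_u$ has already committed to one of $\pm\rho$ applied to the relevant constants. Concretely, the sentence $\rho(\Phi_t(\vec{a}_t))$ is a sentence in the expanded language, so it equals $\sigma_e$ for some $e$, and by the bookkeeping in the first verification lemma, by stage at most $4e + 4$ either $\sigma_e$ or $\neg \sigma_e$ is included (with a trivial conjunct appended) in $\Gamma$. Hence for some $u \geq t$, one of $\rho(\Phi_t(\vec{a}_t))$ or $\neg \rho(\Phi_t(\vec{a}_t))$ is a conjunct of $\theta_u(\vec{c}_u)$.

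Next I would translate this into the required entailment. Writing $\theta_u(\vec{c}_u) = \theta_u(\vec{c}_u - \Phi_t(\vec{a}_t), \Phi_t(\vec{a}_t))$ and substituting the fresh variables $\vec{x}_t$ for $\Phi_t(\vec{a}_t)$ in the usual way, the chosen conjunct becomes literally $\rho(\vec{x}_t)$ or $\neg \rho(\vec{x}_t)$. So $\exists \vec{y}_u \theta_u(\vec{y}_u, \vec{x}_t)$ logically implies whichever of $\pm\rho(\vec{x}_t)$ appeared, and chaining with the previous lemma's entailment $T \vdash \phi(\vec{x}_t) \rightarrow \exists \vec{y}_u \theta_u(\vec{y}_u, \vec{x}_t)$ yields $T \vdash \phi(\vec{x}_t) \rightarrow \pm\rho(\vec{x}_t)$.

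Finally I would pin down the sign using the other clause of the previous lemma, namely $\mathcal{A} \models \exists \vec{y}_u \theta_u(\vec{y}_u, \vec{a}_t)$. Fixing witnesses in $\mathcal{A}$ for $\vec{y}_u$ and reading off the chosen conjunct forces $\mathcal{A} \models \rho(\vec{a}_t)$ when $\rho$ appears and $\mathcal{A} \models \neg\rho(\vec{a}_t)$ when $\neg\rho$ appears, giving both (1) and (2) and hence the completeness of $\phi(\vec{x}_t)$. There is no real obstacle here: the entire argument is a one-paragraph application of the previous lemma, and the only care needed is the standing substitution convention from Section 2.1, which ensures that replacing the constants $\Phi_t(\vec{a}_t)$ by the variables $\vec{x}_t$ inside $\theta_u$ commutes with extracting the $\pm\rho$ conjunct.
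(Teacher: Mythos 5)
Your proposal is correct and follows essentially the same route as the paper's proof: locate a stage $u \geq t$ at which $\pm\rho(\Phi_t(\vec{a}_t))$ has become a conjunct of $\theta_u$ (using that the $\sigma_e$ enumerate all sentences of the expanded language and are eventually decided), then apply both conclusions of the preceding lemma. The only difference is that you spell out explicitly how the clause $\mathcal{A} \models \exists \vec{y}_u \theta_u(\vec{y}_u, \vec{a}_t)$ pins down the sign of $\rho$, a step the paper leaves implicit in ``apply the conclusion of the previous lemma.''
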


Finally, note that $h$ is not initialized at any stage $t \geq s$,
and, by assumption, $R_{\Phi}$ requires attention infinitely often.
Therefore, by definition of requiring attention, if $|\mathcal{A}|$ is
finite, then $|\mathcal{A}| \subseteq dom(\Phi_t)$ for some
$t \geq s$.  If, instead, $|\mathcal{A}|$ is infinite, then, by
definition, for each stage $t \geq s$ where $R_{\Phi}$ requires
attention, $dom(\Phi_t)$ includes an \textit{initial segment} of the
universe of $\mathcal{A}$ that includes all tuples in the domain of
$h_{t-1}$ and at least one more element.  And by construction, at a
stage $t \geq s$ where $R_{\Phi}$ requires attention, $h_t$ is defined
on $dom(\Phi_t) = \vec{a}_t$ (thought of as a tuple of elements).
Therefore, whether $|\mathcal{A}|$ is finite or infinite, for every
tuple $\vec{a}$ in $\mathcal{A}$, there is a $t \geq s$ so that
$\vec{a} \subseteq \vec{a}_t$.  This fact and the previous corollary
combine to demonstrate that $\mathcal{A}$ is effectively atomic.  This
concludes the proof of Theorem~\ref{Main}.  \hfill $\Box$

\begin{proof}[Proof of Corollary~\ref{three}]
  Note that once we have fixed a requirement $R_\Phi$, a stage $s^*$
  and a stage $s$ as in Remark~\ref{remark}, the above construction
  produces the needed $h$ such that $h(\vec{a})$ is the complete
  formula for $\vec{a}$.  The $h$ is constructed uniformly in our
  model $\mathcal{A}$, a requirement $R_\Phi$, a stage $s^*$ and a
  stage $s$. We can think of latter three items as coded by $e$.
  Hence the construction defines a computable $\Psi$ such that
  $\Psi(\mathcal{A}, e)$ is (the code for) the corresponding $h$.  So
  either there is a requirement $R_\Phi$, a stage $s^*$ and a stage
  $s$ as in Remark~\ref{remark}, which are then coded by $e$, and
  $\Psi(\mathcal{A},e)$ is the computable function witnessing that
  $\mathcal{A}$ is effectively atomic; or there is a decidable
  $\mathcal{M} \models T$, such that there is no computable elementary
  embedding of $\mathcal{A}$ into $\mathcal{M}$.
\end{proof}

\section{Implications in Reverse Mathematics}

The main theorem of this paper, Theorem~\ref{Main}, is that
Effectively Prime $\Rightarrow$ Effectively Atomic.  In the context of
Reverse Mathematics, or, more precisely, in some model of second order
arithmetic, to say a model $\mathcal{A}$ of a theory $T$ is
``effectively prime'' is really just to say that it is prime inside
the model of second order arithmetic; that is, the necessary
embeddings establishing that $\mathcal{A}$ is prime must be among the
functions of the model of second order arithmetic.  

However, as we have stressed above, to say that $\mathcal{A}$ is
effectively atomic is not the same as saying that it is atomic,
because the definition of ``atomic'' does not include the existence of
a single function that ``picks out'' a complete formula for each
tuple.  By ``effectively atomic'' in a model of second order
arithmetic we mean that the function picking out the complete formulas
exists inside this model of second order arithmetic.

The theorem's more technical statement is that, given any
$\emptyset$-decidable $\mathcal{A}$, i.e., a structure whose complete
diagram is computable ($\leq_T \emptyset$), there is a
$\emptyset$-decidable $\mathcal{M}$ such that either, for all
$\Phi' \leq_T \emptyset$, $\Phi'$ does not witness that
$\mathcal{A} \prec \mathcal{M}$, or there is a $h \leq_T \emptyset$
witnessing that $\mathcal{A}$ is effectively atomic.  In the
construction, we used that $\varphi_e$ is a listing, computable (in
$\emptyset$), of all functions that are partial computable (in
$\emptyset$). This basic fact follows immediately from the Enumeration
Theorem. In fact, every possible $\Phi'$ appears as infinitely many
$\varphi_e$, and so, we could use this listing to try to diagonalize
against all $\Phi'$.  If we were able to diagonalize against all
$\Phi'$, then we would have that $\mathcal{A}$ is not effectively
prime.  Otherwise, if we were not, then $\mathcal{A}$ would be
effectively atomic. So, the construction is a ``failed'' priority
argument.

\begin{cor} \label{rca_topped}Effectively Prime $\Rightarrow$
  Effectively Atomic holds in all topped models of RCA$_0$, i.e., all
  models containing a set $X$ in which all other sets are computable.
  Hence, Prime Uniqueness holds in all topped models of
  RCA$_0$.\end{cor}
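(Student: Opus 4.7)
The plan is to relativize Theorem~\ref{Main} to the topping set $X$ and then read the corollary off inside the model. So fix a topped model $\mathcal{N}$ of RCA$_0$ with top $X$; every set of $\mathcal{N}$ is $\leq_T X$, so ``decidable'' inside $\mathcal{N}$ means $X$-decidable and ``computable'' inside $\mathcal{N}$ means $X$-computable. Let $\mathcal{A} \models T$ be a decidable model that is prime in the sense of $\mathcal{N}$, which unpacks (as the paragraph just above the corollary explains) to: for every $X$-decidable $\mathcal{M} \models T$ there is an $X$-computable elementary embedding $\mathcal{A} \to \mathcal{M}$.

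The first step is to observe that the construction and verification of Section~2 relativize verbatim to $X$. The only ``computable'' objects in the algorithm are the standard listing $\{\Phi_e\}$, which I replace by the uniformly $X$-computable listing $\{\Phi_e^X\}$ supplied by the Enumeration Theorem relative to $X$, and various local tests (``does $T$ prove this?'', ``does $\mathcal{A}$ satisfy that?'', ``is $\Phi^X_{e,s}$ still $1$-$1$?'', ``has $R_\Phi$ become completely satisfied?''), each of which remains decidable relative to $X$. This upgrades Theorem~\ref{Main} to: for every $X$-decidable $\mathcal{A} \models T$, either some $X$-computable $h$ witnesses effective atomicity relative to $X$, or there is an $X$-decidable $\mathcal{M} \models T$ admitting no $X$-computable elementary embedding from $\mathcal{A}$.

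Applying this dichotomy inside $\mathcal{N}$ finishes the first assertion. Any $\mathcal{M}$ produced by the relativized construction is $\leq_T X$ and so lies in $\mathcal{N}$; by primeness of $\mathcal{A}$ in $\mathcal{N}$ there must be an $X$-computable embedding $\mathcal{A} \to \mathcal{M}$, which kills the second horn and forces the first. The resulting $X$-computable $h$ is $\leq_T X$, hence a set of $\mathcal{N}$, witnessing that $\mathcal{A}$ is effectively atomic inside $\mathcal{N}$. The ``hence'' clause then follows from Observation~(1) after Definition~\ref{eff}: given two prime models $\mathcal{A}, \mathcal{B}$ in $\mathcal{N}$, both are effectively atomic in $\mathcal{N}$, and the classical complete-formula back-and-forth builds an $X$-computable isomorphism $\mathcal{A} \cong \mathcal{B}$, which again belongs to $\mathcal{N}$.

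The main obstacle is the relativization audit in the first step. No clause of Section~2 uses ``computable'' in a way specific to $\emptyset$, but confirming this clause-by-clause --- in particular through the ``requires attention'' machinery and the long inductive Case~2 of the main verification lemma --- is the one place where care is needed. I expect it to be routine, precisely because the authors have already flagged the construction as a ``failed priority argument'' whose every test is decidable in the data $(T, \mathcal{A}, \Phi^X_{e,s})$.
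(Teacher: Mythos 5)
There is a genuine gap. Your argument establishes the corollary only for \emph{standard} (i.e., $\omega$-) topped models: in that setting, relativizing Theorem~\ref{Main} and the Enumeration Theorem to the top $X$ and reading off the dichotomy inside $\mathcal{N}$ is exactly the paper's first paragraph. But the corollary is claimed for \emph{all} topped models of RCA$_0$, including those with nonstandard first-order part, and there your key step --- ``applying this dichotomy inside $\mathcal{N}$'' --- is not available for free. Theorem~\ref{Main} is a theorem of ordinary mathematics about actual Turing machines; to invoke it inside an arbitrary model of RCA$_0$ you must check that its entire proof can be carried out in RCA$_0$ (relative to $X$), i.e., using only $\Sigma^0_1$ induction and its consequences. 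Your ``relativization audit'' concerns replacing $\emptyset$ by $X$ in the computability claims, which is indeed routine; the real issue is the \emph{induction} audit, which your proposal never addresses.

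Concretely, two steps of the verification quietly use more than naive finiteness reasoning when the model is nonstandard, and the paper isolates exactly these. First, the claim that the stage-$s$ algorithm terminates (so that $\delta_s$ exists) rests on ``$i^*$ strictly decreases at each loop''; in RCA$_0$ one formalizes this via a $\Sigma_1$ description of the substages at which $i^*=l$ and uses bounded $\Sigma_1$ comprehension to obtain the finite set of such $l$ and hence the least one. Second, the existence of the requirement $R_\Phi$ and stages $s^*\leq s$ of Remark~\ref{remark} needs a stage $s'$ by which every higher-priority requirement that will ever become completely satisfied already has; since ``$R_{\hat\Phi}$ is eventually completely satisfied'' is only $\Sigma_1$, producing such an $s'$ in a possibly nonstandard model requires bounded $\Sigma_1$ comprehension together with $\Sigma_1$ bounding (both provable in RCA$_0$), rather than the informal ``only finitely many higher-priority requirements, take the max of their satisfaction stages.'' Without this analysis your proof does not cover nonstandard topped models, which is precisely the substantive content of the paper's argument for this corollary.
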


\begin{proof}

  First we will consider only standard models. Relativizations of the
  first statement in the above paragraph and Enumeration Theorem
  replace the $\emptyset$ with the set $X$ and both relativizations
  remain true.  Therefore, we immediately conclude that Effectively
  Prime $\Rightarrow$ Effectively Atomic holds in all standard, topped
  models of RCA$_0$.  

  Since the relativized Enumeration Theorem holds in RCA$_0$, a
  careful analysis of the proof and its induction arguments is needed
  for nonstandard topped models.  The key is that $\Sigma_1$ bounding
  and bounded $\Sigma_1$ comprehension holds in RCA$_0$.  The fact
  that $\Sigma_1$ bounding holds in $RCA_0$ is well known. Recall that
  bounded $\Sigma_1$ comprehension is for all $\Sigma_1$ formulas,
  $\varphi(x)$, and all $k$, there is a an $Z$ such that $i \in Z $
  iff $i < k $ and $\varphi(i)$.  For details of why bounded
  $\Sigma_1$ comprehension holds in $RCA_0$ see Theorem~II.3.9 of
  \cite{Simpson:98}.  There are a few places where these concepts are
  used.

  The first is to show $\delta_s$ exists and our algorithm at each
  stage terminates.  For $l \leq s$, it is $\Sigma_1$ in RCA$_0$ to
  determine if during stage $s$ there is a substage (a loop though the
  algorithm) where $i^* =l$. This $\Sigma_1$ formula in RCA$_0$ says
  that there is a series of formulas (in our fixed language) and
  substages such that this series witness that $i^* =l$.  This
  $\Sigma_1$ formula needs to be coded carefully using some type of
  course of values recursion.  By bounded $\Sigma_1$ comprehension the
  finite set $X$ of such $l$ exists.  Hence is possible to find the
  least $l$ where $l=i^*$ and therefore $\delta_s$ exists.

  The second place where $\Sigma_1$ bounding and bounded $\Sigma_1$
  comprehension is used is in Remark~\ref{remark} to show a
  requirement $R_\Phi$, a stage $s^*$ and a stage $s$ as in
  Remark~\ref{remark} exist.  Assume that there is a computable
  elementary embedding of $\mathcal{A}$ into $\mathcal{M}$.  Let
  $\Phi$ be any (but not necessarily the least) witness of this
  embedding.  So $R_\Phi$ will require attention infinitely often. A
  requirement being completely satisfied is $\Sigma_1$. By bounded
  $\Sigma_1$ comprehension and $\Sigma_1$ bounding, there is a stage
  $s'$ where every requirement with higher priority than $R_\Phi$
  which is going to be satisfied will be satisfied by stage $s'$. Now
  it is straightforward to find $s^*\geq s'$ and $s$ as in the Remark.

  We also need $\Sigma^0_1$ induction to ensure that if $R_\Phi$
  requires attention infinitely often then $dom(\Phi)$ is
  $|\mathcal{A}|$, see the paragraph after the proof of
  Corollary~\ref{16}. Consider the set of $l$ such that there is stage
  $s$ where the length of the largest initial segment included in
  $dom(\Phi_t)$ is greater than $l$.  This is a $\Sigma^0_1$ definable
  cut and hence $\mathbb{N}$.

  Therefore, Effectively Prime $\Rightarrow$ Effectively Atomic holds
  in all non-standard, topped models of RCA$_0$, as well. 
\end{proof}

  Theorem~\ref{Main} does not necessarily hold in a non-topped model
  of RCA$_0$.  The use of the top $X$ was essential in the above
  proof.  We are grateful to Richard Shore and Leo Harrington for this
  observation and for pointing it out to us.

  In fact, the following example shows that Effectively Prime
  $\Rightarrow$ Effectively Atomic does not always hold.  We thank
  David Belanger for this observation which is connected to his paper
  \cite{MR3248791}.  

  \begin{lem} \label{david}
    Let $\mathcal{S}$ be a Scott Set such that for some
    $X \in \mathcal{S}$, $X' \notin \mathcal{S}$, then ``Effectively
    Prime $\Rightarrow$ Effectively Atomic'' does not hold in
    $\mathcal{S}$ (when $\mathcal{S}$ is viewed as the second order
    part of a standard model of second order arithmetic).
  \end{lem}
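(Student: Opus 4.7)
The plan is to take the construction of Proposition~\ref{theory}, relativize it to the set $X \in \mathcal{S}$, and verify that the resulting prime model $\mathcal{A}$ witnesses the failure of ``Effectively Prime $\Rightarrow$ Effectively Atomic'' in $\mathcal{S}$. After replacing each $\Phi_i$ by $\Phi_i^X$ throughout, one obtains an $X$-decidable, complete, atomic theory $T = T^X$ admitting quantifier elimination, together with its $X$-decidable prime model $\mathcal{A}$; both lie in $\mathcal{S}$. By the analysis of Proposition~\ref{theory}, $R_i(x)$ is a complete formula of $T^X$ precisely when $i \notin K^X$, while for $i \in K^X$ halting by stage $s$ the complete extensions of $R_i(x)$ are $R_i(x) \wedge R_{i,s}(x)$ and $R_i(x) \wedge \neg R_{i,s}(x)$.

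Next I would show that $\mathcal{A}$ is not effectively atomic in $\mathcal{S}$. Any $h \in \mathcal{S}$ witnessing effective atomicity would let us decide $K^X$: using $\mathcal{A}$'s decidable diagram, pick some $a \in R_i^{\mathcal{A}}$ and check whether $h(a)$ mentions an $R_{i,s}$ conjunct. This would give $X' \leq_T h \oplus \mathcal{A}$, forcing $X' \in \mathcal{S}$ by closure of Scott sets under Turing reducibility, contradicting the hypothesis.

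The main work is to show $\mathcal{A}$ is effectively prime in $\mathcal{S}$. Fix an arbitrary decidable $\mathcal{M} \models T^X$ in $\mathcal{S}$. Because the axioms force $|R_i^{\mathcal{M}}| = 2$ for every $i$, a candidate embedding is determined by one bit per index $i$, recording which of the two elements of $R_i^{\mathcal{A}}$ is sent to which element of $R_i^{\mathcal{M}}$. I would build a binary tree $T^\star \subseteq 2^{<\omega}$ whose nodes at level $n$ are those partial assignments that respect the first $n$ atomic formulas in some fixed enumeration of the diagrams of $\mathcal{A}$ and $\mathcal{M}$. The tree $T^\star$ is computable in $\mathcal{A} \oplus \mathcal{M}$ and, by the classical existence of an embedding from a prime model into any model of the theory, has nodes at every level, hence an infinite path. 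Since Scott sets satisfy WKL with parameters and $\mathcal{A} \oplus \mathcal{M} \in \mathcal{S}$, some such path belongs to $\mathcal{S}$. By quantifier elimination, a function preserving every atomic formula is automatically an elementary embedding, so this path yields the desired $\mathcal{S}$-computable embedding of $\mathcal{A}$ into $\mathcal{M}$.

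The main obstacle will be ensuring that $T^\star$ is genuinely a binary tree computable from $\mathcal{A} \oplus \mathcal{M}$ alone --- in particular, that checking membership does not implicitly require the jump $X'$. The rescue is provided by the specific form of $T^X$: the ``exactly two elements per $R_i$'' axioms make the branching inherently binary and let us locate the two elements of $R_i^{\mathcal{M}}$ by an unbounded but terminating search inside the decidable $\mathcal{M}$, while quantifier elimination reduces preservation of elementary type to preservation of atomic formulas, which is a decidable condition on any given finite-level node.
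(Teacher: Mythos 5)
Your proposal is correct and follows essentially the same route as the paper: relativize the theory of Proposition~\ref{theory} to $X$, observe that any function in $\mathcal{S}$ computing complete formulas would compute $X'$, and obtain the embedding into an arbitrary $\mathcal{M} \in \mathcal{S}$ from a path (via WKL inside the Scott set) through precisely the binary tree of bit-choices matching the two elements of each $R_i^{\mathcal{A}}$ with those of $R_i^{\mathcal{M}}$, with quantifier elimination upgrading atomic preservation to elementarity. The only cosmetic differences are that the paper simply computes its tree from $\mathcal{A} \oplus \mathcal{M} \oplus X$ (so your worry about avoiding the parameter $X$ is moot, since $X \in \mathcal{S}$), and your non-atomicity test should check $T$-equivalence of $h(a)$ with $R_i(x)$ using the $X$-decidability of $T$ rather than syntactic mention of an $R_{i,s}$ conjunct, a formula equivalent to $R_i(x)$ could mention $R_{i,s}$ vacuously.
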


  \begin{proof}
    Let $T$ be the theory from Proposition~\ref{theory} relativized to
    the above $X$.  Let $\mathcal{M}$ be a countable model of $T$ in
    $\mathcal{S}$.  $\mathcal{M}$'s isomorphism class is determined by
    the number of elements which realize the non principal type
    $p(x) = \{ \neg R_i(x) | i \in \omega\}$.  The prime model
    $\mathcal{A}$ has no elements realizing this type.
    $\mathcal{A} \in \mathcal{S}$ since $\mathcal{A}$ can be
    computably built from $X$. Computably in $\mathcal{M}$ we can find
    two distinct elements, $x^\mathcal{M}_{i,1}, x^\mathcal{M}_{i,2}$
    realizing $R_i(x)$ in $\mathcal{M}$.  A function computing the
    complete formulas for $x^{\mathcal{A}}_{i,1}$ is not in
    $\mathcal{S}$ since such a function computes $X'$.

    Let $Tr \subseteq 2^{<\omega}$ be the set of $\sigma$ such that
    for all $i, s \leq |\sigma|$,
    $R^\mathcal{A}_{i,s}(x^\mathcal{A}_{i,1 })$ iff
    $R^\mathcal{M}_{i,s}(x^\mathcal{M}_{i,\sigma(i)})$.  $Tr$ is
    computable in $\mathcal{A} \oplus \mathcal{M} \oplus X$ and has at
    least one node at each level $s$. Therefore in $\mathcal{S}$ there
    is an $f \in [Tr]$.  For such an $f$, the types of
    $x^\mathcal{A}_{i,1}$ and $x^\mathcal{M}_{i,f(i)}$ are the same
    for each $i$ in $\mathbb{N}$ and hence the map sending
    $x^\mathcal{A}_{i,1}$ to $x^\mathcal{M}_{i,f(i)}$ can be
    computably (in $f$ and $\mathcal{M}$) extended into an
    embedding. This embedding is also in $\mathcal{S}$.

    So $\mathcal{A}$ is effectively prime in $\mathcal{S}$ but not
    effectively atomic in $\mathcal{S}$.  Note that if
    $\mathcal{B} \in \mathcal{S}$ is also prime then a similar
    argument shows that there is an isomorphism between $\mathcal{A}$
    and $\mathcal{B}$ in $\mathcal{S}$.  So $\mathcal{A}$ is not part
    of a counterexample to effectively
  \end{proof}

  \begin{cor}
    WKL$_0 \wedge \neg$ ACA$_0$ implies the negation of ``Effectively
    Prime $\Rightarrow$ Effectively Atomic''. So ``Effectively Prime
    $\Rightarrow$ Effectively Atomic'' implies
    ACA$_0 \vee \neg $WKL$_0$.
  \end{cor}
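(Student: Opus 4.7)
The plan is to reduce directly to the previous lemma by extracting, from any model $\mathcal{N}$ of WKL$_0 \wedge \neg$ ACA$_0$, a Scott set $\mathcal{S}$ (the second-order part of $\mathcal{N}$) together with a set $X \in \mathcal{S}$ whose jump $X'$ does not lie in $\mathcal{S}$. The second half of the corollary is then just the contrapositive of the first.

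First I would invoke two standard facts of reverse mathematics: that the second-order part of any model of WKL$_0$ has the relevant Scott-set closure properties (Turing-downward closure, recursive join, and paths through infinite binary trees coded in $\mathcal{N}$); and that, over RCA$_0$, ACA$_0$ is equivalent to the assertion that the Turing jump $X'$ exists for every set $X$. Thus, inside $\mathcal{N} \models$ WKL$_0 \wedge \neg$ ACA$_0$, the failure of ACA$_0$ supplies some $X \in \mathcal{S}$ with $X' \notin \mathcal{S}$, so the hypothesis of the previous lemma is met.

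Next I would apply the previous lemma. The relativization to $X$ of the theory from Proposition~\ref{theory} is coded in $\mathcal{S}$; its prime model $\mathcal{A}$ is computable from $X$ and hence lies in $\mathcal{S}$; and the lemma's argument shows that any function witnessing $\mathcal{A}$ being effectively atomic would compute $X'$, so no such function can lie in $\mathcal{S}$. On the other hand, for any other decidable model of $T$ inside $\mathcal{S}$, the tree $Tr$ is computable from $\mathcal{A}$, $X$, and that model, and is infinite at every level; WKL$_0$ then produces a path $f$ inside $\mathcal{N}$, and $f$ codes the required elementary embedding. Hence $\mathcal{A}$ is effectively prime in $\mathcal{N}$ but not effectively atomic in $\mathcal{N}$, so ``Effectively Prime $\Rightarrow$ Effectively Atomic'' fails in $\mathcal{N}$, as desired.

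The main obstacle is that the previous lemma is stated for Scott sets regarded as the second-order part of a \emph{standard} model, whereas the corollary must apply to arbitrary (possibly $\omega$-non-standard) models of WKL$_0 \wedge \neg$ ACA$_0$. I would therefore verify that each ingredient of the lemma's proof can be formalized in RCA$_0$ plus an appeal to WKL$_0$ for the path through $Tr$: the relativized construction of the theory $T$ and of the prime model $\mathcal{A}$, the computability and level-by-level nonemptiness of $Tr$, the extraction of an embedding from a path, and the reduction from any putative effective-atomicity function for $\mathcal{A}$ to a computation of $X'$. Once this formalization is checked inside $\mathcal{N}$, the argument goes through uniformly, and the contrapositive yields ``Effectively Prime $\Rightarrow$ Effectively Atomic'' implies ACA$_0 \vee \neg$ WKL$_0$.
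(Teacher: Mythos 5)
Your proposal is correct and takes essentially the same route as the paper, which treats the corollary as immediate from the preceding lemma: the second-order part of a model of WKL$_0 \wedge \neg$ACA$_0$ is a Scott set $\mathcal{S}$ containing some $X$ with $X' \notin \mathcal{S}$ (using that ACA$_0$ is equivalent over RCA$_0$ to the existence of all Turing jumps), so the lemma gives the failure of ``Effectively Prime $\Rightarrow$ Effectively Atomic,'' and the second claim is the contrapositive. Your flagged concern about $\omega$-non-standard models is in fact more care than the paper itself exercises, since its lemma is stated only for Scott sets as second-order parts of standard models and no further argument is supplied for the corollary.
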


  \begin{question}
    What is the reverse mathematics strength of ``Effectively Prime
    $\Rightarrow$ Effectively Atomic''?
  \end{question}

  We know that Prime Uniqueness holds in topped models of RCA$_0$ by
  Corollary~\ref{rca_topped}.  When the $1$-types determine all types,
  the construction from Myhill's Isomorphism Theorem produces an
  isomorphism between $\mathcal{A}$ and $\mathcal{B}$ from the two
  embeddings. However we do not even know whether Prime Uniqueness
  fails in some Scott Set for more complicated theories.

  \begin{question}
    Does Prime Uniqueness hold in RCA$_0$? in WKL$_0$?  What is the
    reverse mathematics strength of Prime Uniqueness?
  \end{question}


\end{document}